 \newcommand{\Z}{\mathbb{Z}}
 \newcommand{\C}{{\cal C}}
  \newcommand{\D}{{\cal D}}
 \newcommand{\N}{{\mathbb{N}}}
\newtheorem{theorem}{Theorem}[section]
\newtheorem{corollary}[theorem]{Corollary}
\newtheorem{proposition}[theorem]{Proposition}
\newtheorem{lemma}[theorem]{Lemma}
\newtheorem{example}[theorem]{Example}
\begin{document}
%
% paper title
% can use linebreaks \\ within to get better formatting as desired
% Do not put math or special symbols in the title.
\title{There is exactly one ${\mathbb{Z}}_2{\mathbb{Z}}_4$-cyclic $1$-perfect code}
%
%
% author names and IEEE memberships
% note positions of commas and nonbreaking spaces ( ~ ) LaTeX will not break
% a structure at a ~ so this keeps an author's name from being broken across
% two lines.
% use \thanks{} to gain access to the first footnote area
% a separate \thanks must be used for each paragraph as LaTeX2e's \thanks
% was not built to handle multiple paragraphs
%

\author{Joaquim Borges and Cristina Fern\'andez-C\'ordoba% <-this % stops a space
\thanks{Manuscript received Month day, year; revised Month day, year.}% <-this % stops a space
\thanks{J. Borges is with the Department of Information and Communications Engineering, Universitat Aut\`{o}noma de Barcelona, 08193-Bellaterra, Spain (e-mail: joaquim.borges@uab.cat)}% <-this % stops a space
\thanks{C. Fern\'andez-C\'ordoba is with the Department of Information and Communications Engineering, Universitat Aut\`{o}noma de Barcelona, 08193-Bellaterra, Spain (e-mail: cristina.fernandez@uab.cat).}% <-this % stops a space
\thanks{This work has been partially supported by the Spanish MICINN grant TIN2013-40524-P and by the Catalan AGAUR grant 2014SGR-691.}% <-this % stops a space
}

% The paper headers
\markboth{Journal of \LaTeX\ Class Files,~Vol.~XX, No.~Y, Month~year}%
{Shell \MakeLowercase{\textit{et al.}}: Bare Demo of IEEEtran.cls for Journals}

\maketitle

% As a general rule, do not put math, special symbols or citations
% in the abstract or keywords.
\begin{abstract}
 Let $\C$ be a ${\mathbb{Z}}_2{\mathbb{Z}}_4$-additive code of length $n > 3$. We prove that if the binary Gray image of $\C$, $C=\Phi(\C)$, is a 1-perfect nonlinear code, then $\C$ cannot be a ${\mathbb{Z}}_2{\mathbb{Z}}_4$-cyclic code except for one case of length $n=15$. Moreover, we give a parity check matrix for this cyclic code. Adding an even parity check coordinate to a ${\mathbb{Z}}_2{\mathbb{Z}}_4$-additive 1-perfect code gives an extended 1-perfect code. We also prove that any such code cannot be ${\mathbb{Z}}_2{\mathbb{Z}}_4$-cyclic.
\end{abstract}

% Note that keywords are not normally used for peerreview papers.
\begin{IEEEkeywords}
Perfect codes, ${\mathbb{Z}}_2{\mathbb{Z}}_4$-additive cyclic codes, simplex codes.
\end{IEEEkeywords}

\IEEEpeerreviewmaketitle

\section{Introduction}

%\IEEEPARstart{}

A $\Z_2\Z_4$-linear code $C$ is the binary Gray image of a $\Z_2\Z_4$-additive code $\C\subseteq\Z_2^\alpha\times\Z_4^\beta$, and if $\beta=0$, then $C$ is a binary linear code. If $\alpha=0$, then $C$ is called $\Z_4$-linear. In 1997, a first family of $\Z_2\Z_4$-linear 1-perfect codes was presented in \cite{PuRi} in the more general context of translation-invariant propelinear codes. Lately, in 1999, all $\Z_2\Z_4$-linear 1-perfect codes were fully classified in \cite{Add}. Specifically, for every appropriate values of $\alpha$ and $\beta$, there exists exactly one $\Z_2\Z_4$-linear 1-perfect code $C$. Note that when $\beta=0$, then $C$ is a Hamming code. In subsequent papers (\cite{RankKernel} and \cite{Kro}), $\Z_2\Z_4$-linear extended 1-perfect codes were also classified. But it was not until 2010, when an exhaustive description of general $\Z_2\Z_4$-linear codes appeared \cite{AddDual}. More recently, in 2014, $\Z_2\Z_4$-cyclic codes have been defined in \cite{Abu}, and also studied in \cite{Z2Z4Dual}.

After all these papers, a natural question is to ask for the existence or nonexistence of $\Z_2\Z_4$-cyclic 1-perfect codes, of course, excluding the linear (Hamming) case when $\beta=0$. In this paper, we show that such codes do not exist with only one exception. This unique $\Z_2\Z_4$-cyclic 1-perfect code has binary length 15, with $\alpha=3$ and $\beta=6$. We also give a parity check matrix for such code. If we add an even parity check coordinate to a $\Z_2\Z_4$-linear 1-perfect code, then we obtain a $\Z_2\Z_4$-linear extended 1-perfect code. We show that none of these codes can be $\Z_2\Z_4$-cyclic.

The paper is organized as follows. In the next section, we give basic definitions and properties. Moreover, we give the type of all $\Z_2\Z_4$-linear 1-perfect codes, computing some parameters that were not specified in \cite{Add}. In Section \ref{principal}, we give the main results of this paper. First, we prove that in a $\Z_2\Z_4$-cyclic 1-perfect code, $\beta$ must be a multiple of $\alpha$. This, immediately excludes a lot of cases. For the remaining ones, using a key property of simplex codes, we prove that $\alpha$ cannot be greater than 3. Therefore, finally, we have only one possible case when $\alpha=3$ and $\beta=6$. In Example \ref{exemple}, we give a parity check matrix for this code in a cyclic form. In Section \ref{estesos}, we prove that a $\Z_2\Z_4$-linear extended 1-perfect code, with $\alpha > 0$, cannot be $\Z_2\Z_4$-cyclic.

\section{Preliminaries}

Denote by ${\mathbb{Z}}_2$ and ${\mathbb{Z}}_4$ the rings of integers modulo 2 and modulo 4,
respectively. A binary code of length $n$ is any non-empty subset $C$ of ${\mathbb{Z}}_2^n$. If that
subset is a vector space then we say that it is a linear code.  Any
non-empty subset ${\cal C}$
of ${\mathbb{Z}}_4^n$ is a quaternary code of length $n$, and an additive subgroup of ${\mathbb{Z}}_4^n$ is
called a quaternary linear code. The elements of a code are usually called codewords.

Given two binary vectors $u,v\in\Z_2^n$, the (Hamming) distance between $x$ and $y$, denoted $d(u,v)$, is the number of coordinates in which they differ. The (Hamming) weight of any vector $z\in\Z_2^n$, $w(z)$, is the number of nonzero coordinates of $z$. The Lee weights of $0, 1, 2, 3\in\Z_4$ are $0, 1, 2, 1$ respectively, and the Lee weight of $a\in\Z_4^m$, $w_L(a)$, is the rational sum of the Lee weights of its components. If $a,b\in\Z_4^m$, then the Lee distance between $a$ and $b$ is $d_L(a,b)=w_L(a-b)$. For a vector ${\bf  u} \in {\mathbb{Z}}_2^\alpha\times{\mathbb{Z}}_4^\beta$ we write
${\bf  u}=(u\mid  u')$ where $u\in\Z_2^\alpha$ and $u'\in\Z_4^\beta$. The weight of ${\bf  u}$ is $w({\bf  u})=w(u)+w_L(u')$. If ${\bf  u}, {\bf v}\in {\mathbb{Z}}_2^\alpha\times{\mathbb{Z}}_4^\beta$, the distance between ${\bf u}=(u\mid  u')$ and ${\bf v}=(v\mid  v')$ is defined as $d({\bf u}, {\bf v})=d(u,v)+d_L(u',v')$. The classical Gray map $\phi:\;\Z_4\;\longrightarrow\;\Z_2^{2}$ is defined by
$$
\phi(0)=(0,0),\;\;\phi(1)=(0,1),\;\;\phi(2)=(1,1),\;\;\phi(3)=(1,0).
$$
If $a=(a_1,\ldots,a_m)\in\Z_4^m$, then the Gray map of $a$ is the coordinatewise extended map $\phi(a)=(\phi(a_1),\ldots,\phi(a_m))$. We naturally extend the Gray map for vectors ${\bf u}=(u\mid  u')\in {\mathbb{Z}}_2^\alpha\times{\mathbb{Z}}_4^\beta$ so that $\Phi({\bf u}) = (u\mid \phi(u'))$. Clearly, the Gray map transforms Lee distances and weights to Hamming distances and weights. Hence, if ${\bf u}, {\bf v}\in {\mathbb{Z}}_2^\alpha\times{\mathbb{Z}}_4^\beta$, we have that $d({\bf u}, {\bf v})=d(\Phi({\bf u}), \Phi({\bf v}))$.

A binary code $C$ of length $n$ is called 1-perfect if any vector not in $C$ is at distance one from exactly one codeword in $C$. Such codes have minimum distance 3 between any pair of codewords, and the cardinality is $|C|=2^n/(n+1)$. It is well known that $n=2^t-1$, for some $t\geq 2$ and hence $|C|=2^{2^t-t-1}$. For any $t$, there is exactly one linear 1-perfect code, up to coordinate permutation, which is called the Hamming code.
An extended 1-perfect code $C'$ is obtained by adding an even parity check coordinate to a 1-perfect code $C$. In this case, $C'$ has minimum distance 4, length $n+1=2^t$, and size $|C'|=2^{2^t-t-1}$.

The dual of a binary Hamming code is a constant weight code called {\em simplex}. The dual of an extended Hamming code is a linear {\em Hadamard} code. In this paper, we make use of two important properties \cite{MacW,Just}:
\begin{itemize}
\item[(a)] A binary Hamming code is cyclic, that is, its coordinates can be arranged such that the cyclic shift of any codeword is again a codeword. Therefore, simplex codes are also cyclic.
\item[(b)] An extended Hamming code of length greater than 4 is not cyclic. Hence, a linear Hadamard code of length greater than 4 is not cyclic.
\end{itemize}

A $\mathbb{Z}_2\mathbb{Z}_4$-additive code $\C$ is an additive subgroup of $\mathbb{Z}_2^\alpha\times\mathbb{Z}_4^\beta$. Such codes are extensively studied in \cite{AddDual}.
Since ${\cal C}$ is a subgroup of $\mathbb{Z}_2^\alpha\times\mathbb{Z}_4^\beta$, it is also
isomorphic to a group $\mathbb{Z}_2^\gamma\times\mathbb{Z}_4^\delta$.
Therefore, ${\cal C}$ is of type $2^\gamma 4^\delta$ as a group, it has $|{\cal
C}| = 2^{\gamma +2\delta}$ codewords, and the number of codewords of order less than two in
${\cal C}$ is $2^{\gamma +\delta}$.

Let $X$ (respectively $Y$) be the set of $\mathbb{Z}_2$ (respectively
$\mathbb{Z}_4$) coordinate positions, so $|X| =\alpha$ and $|Y| = \beta$. Unless
otherwise stated, the set $X$ corresponds to the first $\alpha$ coordinates and
$Y$ corresponds to the last $\beta$ coordinates. Call ${\cal C}_X$
(respectively ${\cal C}_Y )$ the punctured code of ${\cal C}$ by deleting the
coordinates outside $X$ (respectively $Y$), and removing repeated codewords, if necessary. Let ${\cal C}_b$ be the subcode of
${\cal C}$ which contains all order two codewords and the zero codeword. Let $\kappa$ be the
dimension of $({\cal C}_b)_X$, which is a binary linear code.

According to \cite{AddDual}, and considering all these parameters, we say that ${\cal C}$ is a $\Z_2\Z_4$-additive code of type
$(\alpha, \beta; \gamma , \delta; \kappa)$. The binary Gray image of $\C$ is $C=\Phi(\C)=\{\Phi({\bf x})\mid {\bf x}\in\C\}$. In this case, $C$ is called a $\Z_2\Z_4$-linear code of type $(\alpha, \beta; \gamma , \delta; \kappa)$ and its length is $n=\alpha+2\beta$.

The standard inner product in $\mathbb{Z}_2^\alpha\times\mathbb{Z}_4^\beta$, defined in \cite{AddDual}, can be written as
$$\textbf{u}\cdot \textbf{v} = 2\left(\sum_{i=1}^{\alpha}u_iv_i\right)+\sum_{j=1}^{\beta}u'_jv'_j\in \mathbb{Z}_4,$$
where the computations are made taking the zeros and ones in the $\alpha$ binary coordinates as quaternary zeros and ones, respectively. The dual code of ${\cal C}$, is defined in the standard way by
$${\cal C}^\perp=\{\textbf{v} \in \mathbb{Z}_2^\alpha\times\mathbb{Z}_4^\beta \mid \textbf{u}\cdot\textbf{v}=0, \mbox{ for all }\textbf{u}\in{\cal C}\}.$$

The types of dual codes are related in \cite{AddDual}.

\begin{proposition}[\cite{AddDual}]\label{ParamsDual}
If $\C$ is a $\Z_2\Z_4$-additive code of type $(\alpha,\beta;\gamma,\delta;\kappa)$, then its dual code $\C^\perp$ is of type
$$
(\alpha,\beta;\alpha+\gamma-2\kappa,\beta-\gamma-\delta+\kappa; \alpha-\kappa).
$$
\end{proposition}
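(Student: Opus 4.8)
The plan is to start from the canonical generator matrix of $\C$ furnished by \cite{AddDual} and to build, block by block, a generator matrix of $\C^\perp$ whose type parameters can be read off directly. Recall that a code of type $(\alpha,\beta;\gamma,\delta;\kappa)$ has a canonical generator matrix with three blocks of rows: a block of $\kappa$ order-two rows whose binary part is $(I_\kappa\mid T_b)$ and hence nonzero; a block of $\gamma-\kappa$ order-two rows with zero binary part and a $2I_{\gamma-\kappa}$ block on the quaternary side; and a block of $\delta$ order-four rows ending in $I_\delta$ on the quaternary coordinates. To pin down the cardinality first, I would use that the pairing $(\mathbf u,\mathbf v)\mapsto \mathbf u\cdot\mathbf v$ induces the isomorphism $\mathbf v\mapsto(\mathbf u\mapsto i^{\,\mathbf u\cdot\mathbf v})$ between $\Z_2^\alpha\times\Z_4^\beta$ and its character group, so that $\C^\perp$ is the annihilator of $\C$ and $|\C|\,|\C^\perp|=2^{\alpha+2\beta}$. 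Writing the type of $\C^\perp$ as $(\alpha,\beta;\gamma',\delta';\kappa')$, this already yields $\gamma'+2\delta'=\alpha+2\beta-\gamma-2\delta$, which is consistent with the claimed values and reduces the problem to determining $\gamma'$ and $\kappa'$ on their own.

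Next I would solve $\mathbf g\cdot\mathbf h=0$ for $\mathbf h\in\C^\perp$ against each of the three row blocks above. Pairing against the first block fixes (mod $2$) the first $\kappa$ binary entries of $\mathbf h$; pairing against the second block fixes the mod-$2$ reduction of the $\gamma-\kappa$ quaternary entries sitting under $2I_{\gamma-\kappa}$; and pairing against the third block fixes the last $\delta$ quaternary entries. The remaining free parameters are the $\alpha-\kappa$ binary entries outside the first $\kappa$ positions, the $\beta-\gamma-\delta+\kappa$ quaternary entries left unconstrained above, and the doubled ($2\Z_4$) part of the $\gamma-\kappa$ constrained quaternary entries. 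Turning each free parameter on in isolation produces a generating set of $\C^\perp$ splitting into three families: $\alpha-\kappa$ generators of order two with nonzero binary part, $\beta-\gamma-\delta+\kappa$ generators of order four, and $\gamma-\kappa$ generators of order two with zero binary part. Hence $\delta'=\beta-\gamma-\delta+\kappa$ (the order-four generators) and $\gamma'=(\alpha-\kappa)+(\gamma-\kappa)=\alpha+\gamma-2\kappa$ (the order-two generators), in agreement with the statement and with the cardinality count.

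Finally, for $\kappa'=\dim\big(((\C^\perp)_b)_X\big)$ I would note that $(\C^\perp)_b$ is generated by the two order-two families together with the doublings of the order-four generators. Doubling annihilates every binary coordinate, and the order-two family arising from the doubled part has zero binary part by construction, so the binary projection $((\C^\perp)_b)_X$ is spanned exactly by the $\alpha-\kappa$ generators of the first family, which project to independent vectors because of their embedded identity block; this gives $\kappa'=\alpha-\kappa$. The main obstacle is the verification underlying the second paragraph: one must check that the assembled generating set is genuinely in canonical form, that is, that the order-two generators really split off a $\Z_2^{\gamma'}$ direct summand and that no order-four generator secretly collapses to order two after reduction, so that counting generators by order legitimately returns $\gamma'$ and $\delta'$. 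This amounts to tracking the $\Z_4$ lifts and carries carefully when the dual generators are combined into a single matrix, but it is bookkeeping rather than a conceptual difficulty.
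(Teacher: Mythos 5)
The paper does not prove this proposition: it is imported verbatim from the cited reference \cite{AddDual}, so there is no internal proof to compare against. Your argument is essentially the standard derivation given in that reference --- build the dual's generators by solving the orthogonality conditions against the three row blocks of the canonical generator matrix, read off the orders of the resulting generators, and use the annihilator/character-group identity $|\C|\,|\C^\perp|=2^{\alpha+2\beta}$ as a consistency check --- and it is correct; the ``bookkeeping'' you defer (that the free-parameter assignment $\Z_2^{\alpha-\kappa}\times\Z_4^{\beta-\gamma-\delta+\kappa}\times 2\Z_4^{\gamma-\kappa}\to\C^\perp$ is a group isomorphism because the constraints are linear and homogeneous, which simultaneously pins down $\gamma'$, $\delta'$ and, via the identity blocks, $\kappa'$) does go through as you expect.
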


Let $C$ be a $\Z_2\Z_4$-linear 1-perfect code. Then, the corresponding $\Z_2\Z_4$-additive code $\Phi^{-1}(C)$ is also called 1-perfect code. Such codes are completely characterized.

\begin{proposition}[\cite{Add}]\label{Caracteritzacio}
$\mbox{ }$

\begin{itemize}
\item[(i)] Let $n=2^t-1$, where $t\geq 4$. Then, for every $r$ such that $2\leq r\leq t\leq 2r$, there is exactly one $\Z_2\Z_4$-linear 1-perfect code of length $n$, up to coordinate permutation, with parameters $\alpha=2^r-1$ and $\beta= 2^{t-1}-2^{r-1}$.
\item[(ii)] There are no other $\Z_2\Z_4$-linear 1-perfect codes.
\end{itemize}
\end{proposition}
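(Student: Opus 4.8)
The plan is to recast the $1$-perfect condition as a combinatorial condition on the columns of a parity check matrix of $\C$, and then to count. Write $R=\Z_2^\alpha\times\Z_4^\beta$ and let $\pi\colon R\to G:=R/\C$ be the quotient map onto the \emph{syndrome group}. Since $|R|=2^{\alpha+2\beta}=2^n$, the sphere-packing identity $|\C|(n+1)=2^n$ forces $|G|=n+1=2^t$, and $G$, being a quotient of $R$, has exponent dividing $4$, so $G\cong\Z_2^s\times\Z_4^d$ with $s+2d=t$. The central observation is that $\Phi(\C)$ is $1$-perfect precisely when the $n+1$ vectors of weight at most $1$ in $R$ form a complete system of coset representatives of $\C$; equivalently, writing $h_i=\pi(e_i)$ for the image of the $i$-th unit vector, the set
$$\{0\}\cup\{h_i : i\in X\}\cup\{\pm h_j : j\in Y\}$$
must equal $G$, each element occurring exactly once. (This is just the statement that the syndromes of the weight-$\le 1$ error vectors are distinct and exhaust $G$; the $\pm$ appears because $e_j$ and $3e_j=-e_j$ are distinct weight-$1$ vectors for a quaternary coordinate $j\in Y$.)

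First I would pin down the orders of the columns. A binary coordinate satisfies $2e_i=\zero$, so each $h_i$ with $i\in X$ lies in the subgroup $G[2]$ of elements of order at most $2$, which has size $2^{s+d}$. A quaternary column $h_j$ must have order exactly $4$: if $2h_j=0$ then $h_j=-h_j$, so $e_j$ and $-e_j$ would share a coset, contradicting the injectivity above. Since the order-$4$ elements of $G$ come in distinct pairs $\{h,-h\}$, while the nonzero elements of $G[2]$ can only be reached by binary columns, the covering condition splits cleanly: the binary columns must realize each of the $2^{s+d}-1$ nonzero elements of $G[2]$ exactly once, and the quaternary columns must realize each of the $(2^t-2^{s+d})/2$ order-$4$ pairs exactly once. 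This yields
$$\alpha=2^{s+d}-1,\qquad \beta=2^{t-1}-2^{s+d-1}.$$
Setting $r=s+d$ gives $\alpha=2^r-1$ and $\beta=2^{t-1}-2^{r-1}$, and the constraints $s=2r-t\ge0$ and $d=t-r\ge0$ are exactly $t\le 2r$ and $r\le t$; together with $t\ge4$ they force $r\ge2$. This establishes the parameter list and, for part (ii), shows that no other $(\alpha,\beta)$ can occur, since every $1$-perfect code determines such a $G$ and hence such an $r$.

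For existence and uniqueness for a fixed $r$, I would argue that the column condition determines $\C$ up to equivalence. Given $G=\Z_2^{2r-t}\times\Z_4^{t-r}$, listing the nonzero elements of $G[2]$ as the binary columns and one representative of each order-$4$ pair as the quaternary columns defines a surjection $\pi\colon R\to G$, and $\C=\ker\pi$ is then a $1$-perfect code by construction, giving existence. Two such constructions differ only by a permutation of the coordinates (relabelling which coordinate carries which column), an automorphism of $G$, or a sign change $h_j\mapsto-h_j$ on a quaternary coordinate; all of these induce a coordinate permutation on the Gray image $\Phi(\C)$. Hence the code is unique up to coordinate permutation for each admissible $r$, and distinct values of $r$ give distinct types $(\alpha,\beta)$ and therefore inequivalent codes.

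The main obstacle is the reduction in the first paragraph: making precise that the $1$-perfect property is equivalent to the weight-$\le1$ vectors forming a coset transversal, and in particular that the order-$4$ requirement on quaternary columns, together with the observation that $G[2]$ is unreachable by the pairs $\pm h_j$, forces the clean split of the covering count. Once that dichotomy between order-$2$ and order-$4$ columns is in place, the counting and the uniqueness-up-to-permutation are essentially bookkeeping, mirroring the classical proof that the Hamming code is the unique linear $1$-perfect code via its simplex dual. Indeed, the same picture can be phrased dually: the condition says exactly that every nonzero codeword of the Gray image $\Phi(\C^\perp)$ has constant weight $2^{t-1}$, i.e.\ that $\Phi(\C^\perp)$ is a (generalized) simplex code.
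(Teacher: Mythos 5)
The paper does not prove this proposition; it is quoted from the reference [Add] (Borges--Rif\`a, 1999), and the only trace of that proof in the present paper is the remark that the quotient group $\Z_2^\alpha\times\Z_4^\beta/\C_{r,t}$ is isomorphic to $\Z_2^{2r-t}\times\Z_4^{t-r}$. Your argument is a correct, self-contained reconstruction of exactly that approach: the reduction of $1$-perfection to the weight-$\le 1$ vectors forming a coset transversal of $\C$ is valid (the Gray map is an isometry and the ball has size $n+1=|R|/|\C|$, so disjointness and covering are equivalent), the order dichotomy correctly forces the binary columns to exhaust $G[2]\setminus\{0\}$ and the quaternary columns to exhaust the order-$4$ pairs, and the resulting count gives the stated $(\alpha,\beta)$ together with $r\le t\le 2r$. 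The uniqueness step is also sound, since negation of a $\Z_4$ coordinate corresponds under the Gray map to transposing the associated pair of binary coordinates, so monomial equivalence of the additive codes does yield permutation equivalence of the binary images. I see no gap; this is essentially the argument of the cited source rather than a genuinely different route.
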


\bigskip

Here, we strength a little this result by computing the type of these codes. Since $r$ and $t$ completely determine a $\Z_2\Z_4$-linear 1-perfect code, we denote such code by $C_{r,t}$. The corresponding $\Z_2\Z_4$-additive code is $\C_{r,t}=\Phi^{-1}(C_{r,t})$.

\begin{proposition}\label{parametres}
Let $\C_{r,t}$ be of type $(\alpha,\beta;\gamma,\delta;\kappa)$ and let $(\C_{r,t})^\perp$ be the dual code of type $(\bar{\alpha},\bar{\beta};\bar{\gamma},\bar{\delta};\bar{\kappa})$. Then,
\begin{itemize}
\item[(i)] The parameters of $\C_{r,t}$ are:
\begin{eqnarray*}
\alpha &=& 2^r-1; \;\;\; \beta=2^{t-1}-2^{r-1}; \\
\gamma &=& 2^r-1-2r+t; \\
\delta &=& 2^{t-1}-2^{r-1}+r-t;\\
\kappa &=& \gamma.
\end{eqnarray*}
\item[(ii)] The parameters of $(\C_{r,t})^\perp$ are:
\begin{eqnarray*}
\bar{\alpha} &=& \alpha; \;\;\; \bar{\beta}=\beta; \\
\bar{\gamma} &=& 2r-t; \;\;\; \bar{\delta}=t-r; \\
\bar{\kappa} &=& \bar{\gamma}.
\end{eqnarray*}
\end{itemize}
\end{proposition}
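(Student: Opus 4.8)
The plan is to pin down $\alpha,\beta$ and the cardinality first, then read off the remaining parameters from an explicit description of the dual code, and finally invoke Proposition~\ref{ParamsDual}. By Proposition~\ref{Caracteritzacio} we already have $\alpha=2^r-1$ and $\beta=2^{t-1}-2^{r-1}$. Since $C_{r,t}=\Phi(\C_{r,t})$ is $1$-perfect of length $n=2^t-1$, its size is $2^{\,n-t}$, so $|\C_{r,t}|=2^{\,2^t-1-t}$ and hence $\gamma+2\delta=2^t-1-t$. This single relation, together with the known $\alpha,\beta$, reduces the problem to determining one further invariant, for which I would analyse $\C_{r,t}^{\perp}$ directly.

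The dual $\C_{r,t}^{\perp}$ is the $\Z_2\Z_4$-additive simplex-like code generated by the parity-check matrix of $\C_{r,t}$, whose columns realise the syndromes of all weight-$\le 1$ error patterns. These syndromes lie in a group $V$ with $|V|=2^t$, and $1$-perfectness forces each to be hit exactly once; the count $1+\alpha+2\beta=2^t$ confirms this. Writing $V\cong\Z_2^{2r-t}\times\Z_4^{t-r}$ (of order $2^{2r-t}4^{t-r}=2^t$), the binary columns, coming from order-two errors, must be the $2^r-1=\alpha$ nonzero elements of the order-two subgroup $W\cong\Z_2^{r}$ of $V$, while the quaternary columns, coming from $\pm1$ errors, are one representative from each of the $2^{t-1}-2^{r-1}=\beta$ pairs $\{h,-h\}$ of order-four elements. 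Consequently $\C_{r,t}^{\perp}$ is, as a group, isomorphic to $V$, so $\bar\gamma=2r-t$ and $\bar\delta=t-r$ (both nonnegative precisely because $r\le t\le 2r$).

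It remains to compute $\bar\kappa=\dim\big((\C_{r,t}^{\perp})_b\big)_X$, which I expect to be the main obstacle. I would identify the codewords of $\C_{r,t}^{\perp}$ with the homomorphisms $\chi\colon V\to\Z_4$, the $Y$- and $X$-coordinates of $c_\chi$ being $\chi(h_j)$ and $\tfrac12\chi(h_i)$ respectively. The subcode $(\C_{r,t}^{\perp})_b$ has dimension $\bar\gamma+\bar\delta=r$, and a codeword $c_\chi$ in it is supported on $Y$ exactly when $\chi(h_i)=0$ for all $i\in X$; since the $h_i$ run over all nonzero elements of $W$, this is equivalent to $\chi|_W=0$. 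Such $\chi$ factor through $V/W\cong\Z_2^{t-r}$ and therefore automatically take values in $\{0,2\}$, so they form a space of dimension $t-r$. Hence projecting onto $X$ kills a subspace of dimension $t-r$, giving $\bar\kappa=r-(t-r)=2r-t=\bar\gamma$.

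This proves part~(ii), with $(\bar\gamma,\bar\delta,\bar\kappa)=(2r-t,\,t-r,\,2r-t)$. For part~(i) I would apply Proposition~\ref{ParamsDual} to $\C_{r,t}^{\perp}$, using $(\C_{r,t}^{\perp})^{\perp}=\C_{r,t}$ together with $\bar\alpha=\alpha$ and $\bar\beta=\beta$: this yields $\kappa=\alpha-\bar\kappa=2^r-1-(2r-t)$, $\gamma=\alpha+\bar\gamma-2\bar\kappa=2^r-1-2r+t$ (so indeed $\kappa=\gamma$), and $\delta=\beta-\bar\gamma-\bar\delta+\bar\kappa=\beta-(t-r)=2^{t-1}-2^{r-1}+r-t$. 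A final check that $\gamma+2\delta=2^t-1-t$ confirms consistency with the cardinality computed above. The one genuinely delicate point is the determination of $\bar\kappa$ (equivalently $\kappa=\gamma$); everything else is bookkeeping once the dual is described explicitly.
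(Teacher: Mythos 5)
Your proof is correct, but it takes a genuinely different route from the paper's. The paper works on the primal side first: it counts the order-two codewords of $\C_{r,t}$ directly (the subcode with zero $Y$-part projects onto a Hamming code of dimension $2^r-r-1$, and $(\C_{r,t})_Y$ contains all $2^\beta$ order-two vectors, giving $\gamma+\delta=\beta+2^r-r-1$), combines this with $\gamma+2\delta=2^t-t-1$ to obtain $\gamma$ and $\delta$, then simply \emph{cites} \cite{Add} for the isomorphism $\Z_2^\alpha\times\Z_4^\beta/\C_{r,t}\cong\Z_2^{2r-t}\times\Z_4^{t-r}$ to get $\bar\gamma,\bar\delta$, and finally solves the relations of Proposition~\ref{ParamsDual} for $\kappa$ and $\bar\kappa$. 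You instead work entirely on the dual side: you re-derive the quotient-group structure from the sphere-packing/syndrome count (each nonzero syndrome is realised by exactly one weight-one error, the binary errors giving $2^r-1$ distinct order-two syndromes and the quaternary errors the order-four pairs $\{h,-h\}$), you compute $\bar\kappa$ directly via the character description of $\C_{r,t}^\perp$, and you then recover all of part (i) from Proposition~\ref{ParamsDual}. Your version is more self-contained (it does not outsource $\bar\gamma,\bar\delta$ to \cite{Add}), and the direct determination of $\bar\kappa$ is arguably cleaner than solving for it a posteriori; on the other hand, the paper's primal count has the side benefit that the existence of codewords $(x\mid 2,0,\ldots,0)$ with $w(x)=1$ is reused verbatim in the proof of Proposition~\ref{multiple}. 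Two presentational points you should tidy up: the structure of $V$ should be stated as a \emph{consequence} of the fact that its order-two subgroup has exactly $2^r$ elements (your text posits the decomposition first and only checks consistency afterwards), and the step ``$\C_{r,t}^\perp$ is isomorphic to $V$'' deserves a sentence on the non-degeneracy of the inner product, i.e.\ on $|\C_{r,t}||\C_{r,t}^\perp|=2^{\alpha+2\beta}$ and the identification of $\C_{r,t}^\perp$ with the group of homomorphisms $V\to\Z_4$.
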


\begin{IEEEproof}
The parameters $\alpha$, $\beta$, $\bar{\alpha}$ and $\bar{\beta}$ follow directly from Proposition \ref{Caracteritzacio}.

On the one hand, the binary linear code $C_0=\{(x\mid 0,\ldots,0)\in \C_{r,t}\}_X$ is clearly 1-perfect, i.e. a Hamming code. Hence, $C_0$ has dimension $2^r-r-1$. This means that the zero codeword in $(\C_{r,t})_Y$ (and any other one) is repeated $2^{2^r-r-1}$ times in $\C_{r,t}$. On the other hand, consider a vector of the form
$$
{\bf u}=(u\mid u')=(0,\ldots,0\mid 0,\ldots 0,2,0,\ldots,0)\in\Z_2^\alpha\times\Z_4^\beta,
 $$
 where $\alpha=2^r-1$ and $\beta=2^{t-1}-2^{r-1}$. Since the minimum distance in $\C_{r,t}$ is 3, the minimum weight is also 3 because $\C_{r,t}$ is distance invariant \cite{PuRi}. Hence ${\bf u}$ must be at distance one from a weight 3 codeword ${\bf x}=(x\mid x')$, where $w(x)=1$ and $x'=u'$. Indeed, if $w(x)=0$ and $w(x')=3$, then $2{\bf x}$ would have weight 2. Therefore, $(\C_{r,t})_Y$ has $2^\beta$ distinct codewords of order two (including here the zero codeword). We conclude that $\C_{r,t}$ has $2^\beta\cdot 2^{2^r-r-1}$ order two codewords (again, including the zero codeword). Thus, the dimension of $(\C_{r,t})_b$ is
\begin{equation}\label{eq1}
\gamma +\delta=\beta + 2^r-r-1=2^{t-1} + 2^{r-1} -r -1.
\end{equation}

The size of $\C_{r,t}$ is $2^{2^t-t-1}$. Therefore,
\begin{equation}\label{eq2}
\gamma+2\delta=2^t-t-1.
\end{equation}

Combining Equations \ref{eq1} and \ref{eq2}, we obtain the values of $\gamma$ and $\delta$.

As can be seen in \cite{Add}, the quotient group $\Z_2^\alpha\times\Z_4^\beta / \C_{r,t}$ is isomorphic to $\Z_2^{2r-t}\times \Z_4^{t-r}$. In other words, $\C_{r,t}^\perp$ has parameters $\bar{\gamma}=2r-t$ and $\bar{\delta}=t-r$. Now, the values of $\kappa$ and $\bar{\kappa}$ are easily obtained by applying Proposition \ref{ParamsDual}.
\end{IEEEproof}

\bigskip

Let $v=(v_1,\ldots,v_m)$ be an element in $\Z_2^m$ or $\Z_4^m$. We denote by $\sigma(v)$ the right cyclic shift of $v$, i.e. $\sigma(v)=(v_m,v_1,\ldots,v_{m-1})$.
We recursively define $\sigma^j(v)=\sigma\left(\sigma^{j-1}(v)\right)$, for $j=2,3,\ldots$ For vectors ${\bf  u} = ( u \mid  u') \in {\mathbb{Z}}_2^\alpha \times {\mathbb{Z}}_4^\beta$ we extend the definition of $\sigma$ as the double right cyclic shift of ${\bf  u}$, that is, $\sigma({\bf u})=\left(\sigma(u)\mid \sigma(u')\right)$.

A ${\mathbb{Z}_2 {\mathbb{Z}_4}}$-additive code ${\cal C}\subseteq\mathbb{Z}_2^\alpha\times\mathbb{Z}_4^\beta$ is a $\Z_2\Z_4$-cyclic code if for each codeword $\textbf{x}\in {\cal C}$, we have that $\sigma({\bf x})\in\C$. Such codes were first defined in \cite{Abu} and also studied in \cite{Z2Z4Dual}. As can be seen in \cite{Abu}, the dual of a $\Z_2\Z_4$-cyclic code is also $\Z_2\Z_4$-cyclic.

\section{There is no nontrivial $\Z_2\Z_4$-cyclic perfect codes with one exception}\label{principal}

We say that a code is nontrivial if it has more than two codewords and its minimum distance is $d>1$. Apart from 1-perfect codes, there is only another nontrivial binary perfect code. It is the linear binary Golay code of length 23. But this code has not any $\Z_2\Z_4$-linear structure apart from the binary linear one \cite{Josep}. Therefore, any binary nonlinear and nontrivial $\Z_2\Z_4$-linear perfect code is a 1-perfect code.

In this section, we prove that for any $\Z_2\Z_4$-linear 1-perfect code, which is not a Hamming code, its corresponding $\Z_2\Z_4$-additive code cannot be $\Z_2\Z_4$-cyclic with exactly one exception.

\begin{proposition}\label{multiple}
If $\C_{r,t}$ is a $\Z_2\Z_4$-cyclic 1-perfect code, then $t=r$ or $t=2r$.
\end{proposition}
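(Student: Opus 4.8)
The plan is to reduce the statement to a divisibility condition on the binary cyclic codes obtained from $\C_{r,t}$. Under the standing hypothesis $r\le t\le 2r$, the desired conclusion is equivalent to $\kappa$ taking one of only two values. Indeed, by Proposition \ref{parametres}(i) we have $\kappa=\gamma=\alpha-2r+t$, so viewed as a binary linear code of length $\alpha=2^r-1$ the code $(\C_b)_X$ has redundancy $\alpha-\kappa=2r-t$. Proving the proposition therefore amounts to showing that this redundancy can only be $r$ (forcing $t=r$) or $0$ (forcing $t=2r$).

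The two objects I would work with are $C_0=\{(x\mid\zero)\in\C_{r,t}\}_X$ and $(\C_b)_X$. First I would check that both are cyclic of length $\alpha$ in the coordinate arrangement inherited from $\C_{r,t}$: if $(x\mid\zero)\in\C_{r,t}$ then $\sigma(x\mid\zero)=(\sigma(x)\mid\zero)\in\C_{r,t}$, so $C_0$ is cyclic, and since $\C_b=\{\mathbf{x}\in\C_{r,t}\mid 2\mathbf{x}=\zero\}$ is preserved by $\sigma$, its projection $(\C_b)_X$ is cyclic as well. Because every $(x\mid\zero)$ has order at most two, we get the chain $C_0\subseteq(\C_b)_X\subseteq\Z_2^\alpha$. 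Now I invoke the fact already used in the proof of Proposition \ref{parametres}, namely that $C_0$ is a linear $1$-perfect code of length $2^r-1$, hence a Hamming code of dimension $\alpha-r$; being cyclic, it is a \emph{cyclic} Hamming code, whose generator polynomial is the minimal polynomial of a primitive $(2^r-1)$-th root of unity and is thus irreducible of degree $r$.

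The conclusion then follows from the ideal structure of cyclic codes: any cyclic code containing $C_0$ has generator polynomial dividing that of $C_0$, and since the latter is irreducible its only divisors are $1$ and the polynomial itself. Hence the chain $C_0\subseteq(\C_b)_X\subseteq\Z_2^\alpha$ forces $(\C_b)_X=C_0$ or $(\C_b)_X=\Z_2^\alpha$. In the first case $\kappa=\dim C_0=\alpha-r$, so $2r-t=r$ and $t=r$; in the second $\kappa=\alpha$, so $2r-t=0$ and $t=2r$. I expect the one genuinely delicate point to be the irreducibility claim — that a cyclic linear $1$-perfect code of length $2^r-1$ must have an irreducible (primitive) generator polynomial rather than a reducible product of lower-degree factors. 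This is precisely where perfectness enters: the syndrome assignment $i\mapsto\xi^i$ can enumerate every nonzero element of $\mathrm{GF}(2^r)$ exactly once only when $\xi$ is primitive, which pins down the generator polynomial as the irreducible minimal polynomial of $\xi$. Everything else is routine bookkeeping with the parameters from Proposition \ref{parametres}.
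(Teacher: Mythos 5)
Your proof is correct, but it takes a genuinely different route from the paper's. The paper never looks at $(\C_b)_X$ or at generator polynomials: it takes a weight-$3$ codeword ${\bf x}=(x\mid 2,0,\ldots,0)$ with $w(x)=1$ (whose existence is already established in the proof of Proposition \ref{parametres}), notes that $\sigma^{\beta}({\bf x})$ fixes the quaternary part, so by minimum weight $3$ it must also fix the binary weight-one part, forcing $\alpha\mid\beta$; the resulting divisibility $(2^{t-r}-1)/(2^{r}-1)\in\N$ gives $r\mid t$ and hence $t\in\{r,2r\}$. You instead squeeze the cyclic code $(\C_b)_X$ between the cyclic Hamming code $C_0$ and $\Z_2^{\alpha}$ and use irreducibility of the Hamming generator polynomial to leave only two possible values of $\kappa$; all the individual steps (cyclicity of $C_0$ and of $(\C_b)_X$, the inclusion $C_0\subseteq(\C_b)_X$, $\kappa=\gamma=\alpha-2r+t$) check out. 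What each buys: the paper's argument is shorter, needs essentially nothing beyond one low-weight codeword and elementary number theory, and yields the clean intermediate fact $\alpha\mid\beta$; yours is more structural, locating the obstruction in the scarcity of ideals of $\Z_2[x]/(x^{\alpha}-1)$ lying above the Hamming ideal, but it leans on Proposition \ref{parametres} and on the fact that a cyclic $1$-perfect code of length $2^{r}-1$ has a primitive irreducible generator polynomial. You rightly flag that last fact as the delicate point; note that your syndrome sketch, as phrased, only rules out an irreducible-but-imprimitive $g$. To exclude a reducible $g=g_1\cdots g_k$ you should add the companion count: the syndromes of the weight-one errors then lie in $\prod_i \mathrm{GF}(2^{d_i})^{*}$, a set of size $\prod_i(2^{d_i}-1)<2^{r}-1$, so two single errors collide and the minimum distance drops below $3$. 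With that one sentence supplied, the proof is complete.
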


\begin{IEEEproof}
By the argument in the proof of Proposition \ref{parametres}, we may assume that $\C_{r,t}$ contains a codeword of the form ${\bf x}=(x\mid 2,0,\ldots,0)$ with $w(x)=1$. Now, consider the codeword ${\bf z}=\sigma^\beta({\bf x})$. If ${\bf z}\neq {\bf x}$ then ${\bf z} + {\bf x}$ would have weight 2. Consequently, ${\bf z}$ must be equal to ${\bf x}$ implying that $\beta$ is a multiple of $\alpha$, that is, $2^{t-1}-2^{r-1}$ is a multiple of $2^r-1$. Thus,
$$
\frac{2^{r-1}(2^{t-r}-1)}{2^r-1}\in\N \;\Longrightarrow\; \frac{(2^{t-r}-1)}{2^r-1}\in\N.
$$
Therefore $r$ divides $t-r$ implying that $r$ divides $t$. Since $r\leq t\leq 2r$, the only possibilities are $t=r$ or $t=2r$.
\end{IEEEproof}

\bigskip

If $t=r$, then $C_{r,t}=\Phi(\C_{r,t})$ is linear, i.e. a Hamming code. In effect, it is well known that its coordinates can be arranged such that it is a binary cyclic code. We are interested in those codes whose binary Gray image is not linear, that is, when $t=2r$. For this case, $t=2r$, we have that $\C_{r,2r}$ is of type
$$
(2^r-1,2^{r-1}(2^r-1);2^r-1,2^{r-1}(2^r-1)-r;2^r-1),
$$
and applying Proposition \ref{parametres} we obtain that its dual code $\C^\perp_{r,2r}$ is of type
$$
(2^r-1,2^{r-1}(2^r-1);0,r;0).
$$

\begin{example}\label{exemple}
For $r=2$ we have that the type of $\C_{2,4}$ is $(3,6;3,4;3)$. By Proposition \ref{parametres}, its dual code $\C_{2,4}^\perp$ is of type $(3,6;0,2;0)$. Consider the matrix
$$
H=\left(
    \begin{array}{ccc|cccccc}
      1 & 1 & 0 & 1 & 1 & 2 & 3 & 1 & 0 \\
      0 & 1 & 1 & 0 & 1 & 1 & 2 & 3 & 1 \\
    \end{array}
  \right).
$$

The matrix $H$ generates a code of type $(3,6;0,2;0)$. Any column is not a multiple of another one. Hence the code $\C^*$ with parity check matrix $H$ has minimum distance at least 3, type $(3,6;3,4;3)$ and size $2^{11}$. Therefore,
$\C^*$ is the $\Z_2\Z_4$-additive 1-perfect code $\C_{2,4}$ and $H$ generates $\C_{2,4}^\perp$. Note that the second row of $H$ is the shift of the first one. Also, the first row minus the second one gives the shift of the second row. Since the shift of any row of $H$ is a codeword, we have that the shift of any codeword is again a codeword. Consequently, $\C_{2,4}^\perp$ is a $\Z_2\Z_4$-cyclic code and so is $\C_{2,4}$.
\end{example}

\bigskip

From now on, we denote by $D^{(r)}$ the code $\C^\perp_{r,2r}$ of binary length $n=\alpha+2\beta=2^{2r}-1$. Hence, $D^{(r)}_b$ is the set of codewords of order 2 and the zero codeword. Recall that the dual of a binary Hamming code is called simplex. Of course, the coordinates of a simplex code can be arranged such that the code is cyclic. We denote by $S_r$ a cyclic simplex code of length $2^r-1$.

\begin{lemma}\label{PesConstant}
The code $D^{(r)}$ is a constant weight code, where all nonzero codewords have weight $2^{2r-1}$.
\end{lemma}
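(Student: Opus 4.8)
The plan is to read off the columns of a parity check matrix $H$ of $\C_{r,2r}$ (equivalently, a generator matrix of $D^{(r)}$) directly from the $1$-perfect property, and then to compute the weight of an arbitrary codeword coordinate by coordinate. Since $D^{(r)}$ is of type $(2^r-1,2^{r-1}(2^r-1);0,r;0)$, it has $\gamma=0$, so it is generated by $r$ codewords of order four; every codeword is $\sum_{k=1}^r m_k\mathbf{g}_k$ with $\mathbf{m}=(m_1,\dots,m_r)\in\Z_4^r$, and these $4^r=2^{2r}$ combinations are distinct. The syndrome map $\mathbf{e}\mapsto(\mathbf{e}\cdot\mathbf{g}_1,\dots,\mathbf{e}\cdot\mathbf{g}_r)\in\Z_4^r$ for $\C_{r,2r}$ has image of size $2^{2r}$, matching the number of cosets. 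Because $\C_{r,2r}$ is $1$-perfect, every coset contains a unique vector of weight at most one, so the $2^{2r}$ vectors of weight $\leq 1$ (the zero vector, the $\alpha$ binary unit vectors, and the $2\beta$ quaternary vectors with a single $1$ or $3$) have pairwise distinct syndromes filling $\Z_4^r$. The factor $2$ in the binary part of the inner product forces each binary unit vector to have a syndrome $2b\in\{0,2\}^r$, so the $\alpha=2^r-1$ binary columns of $H$ must be exactly all nonzero vectors of $\Z_2^r$ (the order-two syndromes). The remaining $4^r-2^r$ syndromes, which are the order-four elements of $\Z_4^r$, must then be covered exactly once by the quaternary columns together with their negatives; hence the $\beta=2^{r-1}(2^r-1)$ quaternary columns are precisely one representative from each pair $\{c,-c\}$ of order-four elements of $\Z_4^r$. (Replacing a representative $c$ by $-c$ only negates one quaternary coordinate, which preserves all weights, so this choice is immaterial.)

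With this description of $H$ in hand, I would fix $\mathbf{m}\in\Z_4^r\setminus\{\zero\}$ and split $w(\sum_k m_k\mathbf{g}_k)=w(\text{binary part})+w_L(\text{quaternary part})$. The binary part is $\bar{\mathbf{m}}B$, where $\bar{\mathbf{m}}=\mathbf{m}\bmod 2$ and $B$ is the matrix whose columns are all nonzero vectors of $\Z_2^r$; this is a generator matrix of the simplex code $S_r$, so the binary part has Hamming weight $2^{r-1}$ when $\bar{\mathbf{m}}\neq\zero$ and weight $0$ when $\bar{\mathbf{m}}=\zero$. For the quaternary part I would use that its $j$-th coordinate is the homomorphic image $\langle\mathbf{m},c_j\rangle\in\Z_4$, and that $w_L(a)=w_L(-a)$, to write $w_L(\text{quaternary part})=\tfrac12\sum_{c\ \mathrm{order}\ 4}w_L(\langle\mathbf{m},c\rangle)$. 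I would then evaluate $T=\sum_{c\in\Z_4^r}w_L(\langle\mathbf{m},c\rangle)$ by analyzing the homomorphism $c\mapsto\langle\mathbf{m},c\rangle$: if $\mathbf{m}$ has an odd entry it is onto $\Z_4$ with equal fibres, giving $T=4^{r-1}(0+1+2+1)=4^r$, while if $\mathbf{m}\in\{0,2\}^r\setminus\{\zero\}$ it is onto $\{0,2\}$ and again $T=4^r$. Subtracting the order-two contribution $\sum_{c\in\{0,2\}^r}w_L(\langle\mathbf{m},c\rangle)$, which equals $2^r$ when $\bar{\mathbf{m}}\neq\zero$ and $0$ when $\bar{\mathbf{m}}=\zero$, yields $w_L(\text{quaternary part})=2^{2r-1}-2^{r-1}$ in the first case and $2^{2r-1}$ in the second.

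Combining the two parts finishes the argument: if $\bar{\mathbf{m}}\neq\zero$ the total weight is $2^{r-1}+(2^{2r-1}-2^{r-1})=2^{2r-1}$, and if $\bar{\mathbf{m}}=\zero$ (so $\mathbf{m}\neq\zero$) it is $0+2^{2r-1}=2^{2r-1}$; hence every nonzero codeword of $D^{(r)}$ has weight $2^{2r-1}$. The main obstacle is the first step, namely justifying rigorously that $1$-perfectness forces the exact column structure of $H$, with correct bookkeeping of the factor $2$ so that the binary columns account for all order-two syndromes and the quaternary columns for all order-four pairs; once this complete-set structure is secured, the weight computation is a routine counting over $\Z_4^r$. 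One can sanity-check the whole scheme against Example \ref{exemple}, where the three binary columns of $H$ are the nonzero vectors of $\Z_2^2$ and the six quaternary columns represent the six pairs of order-four elements of $\Z_4^2$, giving constant weight $2^{3}=8$.
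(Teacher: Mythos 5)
Your proposal is correct, but it takes a genuinely different route from the paper. The paper's proof is essentially one line: since any $1$-perfect code has the same weight distribution as the Hamming code of the same length, the MacWilliams identity (valid for $\Z_2\Z_4$-additive codes and their duals) forces $D^{(r)}=\C_{r,2r}^\perp$ to have the weight distribution of the simplex code of length $2^{2r}-1$, i.e., constant weight $2^{2r-1}$. You instead make the argument constructive: you use $1$-perfectness to pin down the exact column structure of a generator matrix of $D^{(r)}$ (the binary columns are all of $\Z_2^r\setminus\{\zero\}$, scaled by $2$ in the syndrome; the quaternary columns are one representative of each pair $\{c,-c\}$ of order-four elements of $\Z_4^r$), and then compute the weight of $\mathbf{m}G$ by summing $w_L$ over the fibres of the homomorphism $c\mapsto\langle\mathbf{m},c\rangle$. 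Your computation checks out in both cases ($\bar{\mathbf{m}}\neq\zero$ and $\bar{\mathbf{m}}=\zero$), and the one step you flag as delicate is fine: the $2^{2r}$ vectors of weight at most one lie in distinct cosets of $\C_{r,2r}=(\C_{r,2r}^\perp)^\perp$, so their syndromes are distinct and fill $\Z_4^r$, which forces the claimed column sets. What each approach buys: the paper's is far shorter and reuses standard machinery, at the cost of invoking the MacWilliams identity for additive codes and the known weight distribution of $1$-perfect codes; yours is elementary and self-contained, and as a by-product re-derives the explicit parity check matrix of $\C_{r,2r}$ (a fact the paper also needs, in weaker form, in Proposition \ref{zeros}), essentially recovering the characterization from \cite{Add}.
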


\begin{IEEEproof}
The weight distributions of dual codes are related by the MacWilliams identity \cite{PuRi,DelLev}, as well as for binary linear codes. It is well known that any 1-perfect code has the same weight distribution as the Hamming code of the same length. Therefore, $D^{(r)}$ must have the same weight distribution as the simplex code of length $n=2^{2r}-1$. Hence, the weight of any nonzero codeword is $(n+1)/2=2^{2r-1}$.
\end{IEEEproof}

\bigskip

\begin{proposition}\label{zeros}
If $D^{(r)}$ is $\Z_2\Z_4$-cyclic, then $(D^{(r)})_X = S_r$. Moreover, a codeword ${\bf z}\in D^{(r)}$ has the zero vector in the $\Z_2$ part, ${\bf z}=(0,\ldots,0\mid z_1',\ldots,z'_\beta)$, if and only if ${\bf z}\in D^{(r)}_b$.
\end{proposition}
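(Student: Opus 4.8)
The plan is to exploit the group structure of $D^{(r)}=\C_{r,2r}^\perp$, which by Proposition \ref{parametres} has type $(\alpha,\beta;0,r;0)$ and is therefore isomorphic to $\Z_4^r$, together with two duality facts: that $(D^{(r)})_X$ lies inside the binary dual of the shortened code $C_0=\{x:(x\mid 0)\in\C_{r,2r}\}$, and that the zero-$\Z_2$-part codewords of $D^{(r)}$ form the quaternary dual of the $Y$-projection of $\C_{r,2r}$. I first dispose of the reverse implication of the ``moreover'' statement. Since $\bar\kappa=0$, the projection $(D^{(r)}_b)_X$ is trivial, so every codeword of $D^{(r)}_b$ has zero $\Z_2$ part; equivalently $D^{(r)}_b=2D^{(r)}$, and doubling a vector annihilates its binary coordinates. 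This also records $|D^{(r)}_b|=2^{\bar\gamma+\bar\delta}=2^r$. Denote by $K=\{(0\mid z')\in D^{(r)}\}$ the kernel of the projection $\pi_X$, so that $D^{(r)}_b\subseteq K$.

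For the inclusion $(D^{(r)})_X\subseteq S_r$, recall from the proof of Proposition \ref{parametres} that $C_0$ is a Hamming code of length $2^r-1$ and dimension $2^r-1-r$. If $(v\mid v')\in D^{(r)}=\C_{r,2r}^\perp$ and $x\in C_0$, then $(v\mid v')\cdot(x\mid 0)=2\sum v_ix_i\equiv 0\pmod 4$, which forces $\sum v_ix_i\equiv 0\pmod 2$; hence $(D^{(r)})_X\subseteq C_0^{\perp}$, the binary dual of the Hamming code, which is a simplex code. Moreover, since $D^{(r)}$ is $\Z_2\Z_4$-cyclic, for $(v\mid v')\in D^{(r)}$ we have $\sigma(v\mid v')=(\sigma(v)\mid\sigma(v'))\in D^{(r)}$, so $(D^{(r)})_X$ is a binary cyclic code. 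It will therefore coincide with the cyclic simplex code $S_r$ as soon as I show its dimension is exactly $r$.

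The crux is to count $K$. A vector $(0\mid z')$ lies in $\C_{r,2r}^\perp$ iff $\sum z'_ju'_j\equiv 0\pmod 4$ for every $(u\mid u')\in\C_{r,2r}$; since this condition depends only on $u'$, it says exactly that $z'\in(\C_{r,2r})_Y^{\perp}$, the quaternary dual of the $Y$-projection. The kernel of projecting $\C_{r,2r}$ onto $Y$ is $\{(x\mid 0)\}\cong C_0$, of size $2^{2^r-1-r}$, so $|(\C_{r,2r})_Y|=|\C_{r,2r}|/2^{2^r-1-r}$, and the quaternary duality $|(\C_{r,2r})_Y|\cdot|(\C_{r,2r})_Y^{\perp}|=4^\beta$ then gives $|K|=|(\C_{r,2r})_Y^{\perp}|=2^r$. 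Consequently $|(D^{(r)})_X|=|D^{(r)}|/|K|=2^{2r}/2^r=2^r$, so $(D^{(r)})_X$ has dimension $r$ and, by the inclusion above, equals $S_r$. Finally, from $D^{(r)}_b\subseteq K$ and $|D^{(r)}_b|=|K|=2^r$ we conclude $K=D^{(r)}_b$, which is precisely the forward implication of the ``moreover'' statement.

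I expect the main obstacle to be this kernel count. The set $K$ is defined inside the mixed alphabet $\Z_2^\alpha\times\Z_4^\beta$, where the inner product weights the binary and quaternary coordinates differently, so the work lies in recognizing $K$ as a purely quaternary dual $(\C_{r,2r})_Y^{\perp}$ and then pinning down $|(\C_{r,2r})_Y|$ from the Hamming structure of $C_0$. Once the mixed-alphabet orthogonality is reduced to the clean quaternary identity $|C|\cdot|C^{\perp}|=4^\beta$, the dimension $r$ drops out, and with it both conclusions of the proposition.
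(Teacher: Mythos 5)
Your proof is correct, but it reaches both conclusions by a different route than the paper. For the identification $(D^{(r)})_X=S_r$, the paper argues directly on the generator matrix $G=(G_1\mid G_2)$ of $D^{(r)}$: since $G$ is a parity check matrix for the minimum-distance-$3$ code $\C_{r,2r}$, the $r\times(2^r-1)$ matrix $G_1$ can have no zero or repeated columns, hence its columns are exactly the nonzero vectors of $\Z_2^r$ and its row space is the simplex code; the ``moreover'' part then follows by counting the $2^r$ fibers of the projection onto $X$ against $|D^{(r)}_b|=2^r$. You instead work by duality from the $\C_{r,2r}$ side: you trap $(D^{(r)})_X$ inside $C_0^\perp$ via the mixed inner product with the codewords $(x\mid 0)$, identify the kernel $K$ of the $X$-projection with the quaternary dual $((\C_{r,2r})_Y)^\perp$, and compute $|K|=2^r$ from $|C|\cdot|C^\perp|=4^\beta$; the dimension of $(D^{(r)})_X$ and the equality $K=D^{(r)}_b$ then both drop out of $|D^{(r)}|=2^{2r}$. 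The final step (a containment between two sets of equal cardinality $2^r$) is common to both arguments, but the two key facts are established in opposite orders: the paper first pins down $(D^{(r)})_X$ and deduces $|K|$, while you first pin down $|K|$ and deduce $(D^{(r)})_X$. The paper's version is shorter and reads the simplex structure off the parity-check matrix in one line; yours avoids choosing a generator-matrix normal form and makes the role of the two duals ($C_0^\perp$ binary, $(\C_{r,2r})_Y^\perp$ quaternary) explicit, at the cost of two extra cardinality computations. Both rest on the same inputs, namely the type $(\alpha,\beta;0,r;0)$ of $D^{(r)}$ and the fact that $C_0$ is a Hamming code, so either proof could replace the other.
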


\begin{IEEEproof}
A generator matrix for $D^{(r)}$ would have the form
$$
G=\left(
    \begin{array}{c|c}
      G_1 & G_2  \\
    \end{array}
  \right),
$$
where $G_1$ is a $r \times 2^r-1$ generator matrix for $(D^{(r)})_X$. Since the minimum weight of $\C_{r,2r}$ is 3, $G_1$ has neither repeated columns, nor the zero column. Therefore $G_1$ has as columns all the nonzero binary vectors of length $r$ and $(D^{(r)})_X=S_r$.
The size of $D^{(r)}$ is $|D^{(r)}|=2^{2r}$ and
the number of codewords of order less than or equal to 2 is $|D^{(r)}_b|=2^r$. Hence, $D^{(r)}$ can be viewed as a set of $2^r$ cosets of $D^{(r)}_b$. We conclude that each codeword in $(D^{(r)})_X$ appears $2^r$ times in $D^{(r)}$. So, the zero codeword in $(D^{(r)})_X$ appears in $D^{(r)}$ exactly in the codewords of $D^{(r)}_b$.
\end{IEEEproof}

\bigskip

%Denote by $D_q$ the set of order 4 codewords, i.e. $D_q=D\setminus D_b$. For any vector $x\in \Z_4^\beta$, $\;\;x \mod 2$ is the vector obtained doing modulo 2 in all the coordinates of $x$. If $A\subset\Z_4^\beta$, then $A \mod 2$ is the set $\{x \mod 2\mid x\in A\}$.
%
%\begin{corollary}\label{SonSimplexs}
%Suppose that $D$ is $\Z_2\Z_4$-cyclic. Removing repeated codewords, $D'=\left((D_q)_Y \mod 2\right)$ is formed by $2^{r-1}$ copies of $S_r$.
%\end{corollary}
%
%\begin{IEEEproof}
%$D'$ is a binary linear cyclic code. Again, by \cite{Boni}, $D_X=S_r$ or $D_X$ is a replication of $S_r$.
%\end{IEEEproof}

\begin{proposition}\label{SonSimplexs}
Suppose that $D^{(r)}$ is $\Z_2\Z_4$-cyclic. If we change the coordinates `2' by `1' in $(D^{(r)}_b)_Y$ we obtain $2^{r-1}$ copies of $S_r$.
\end{proposition}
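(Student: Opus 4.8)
The plan is to show that every codeword of $D^{(r)}_b$ is invariant under the block rotation $\sigma^\alpha$, so that it is literally constant on the $m:=2^{r-1}$ consecutive blocks of length $\alpha=2^r-1$, and then to recognize the single block as a simplex code. First I would record the group structure: since $D^{(r)}=\C^\perp_{r,2r}$ has type $(\alpha,\beta;0,r;0)$, it is isomorphic to $\Z_4^r$, so its subgroup of elements of order at most two is exactly $D^{(r)}_b = 2D^{(r)} = \{2{\bf z}:{\bf z}\in D^{(r)}\}$, of size $2^r$. Because $\sigma$ is $\Z_2\Z_4$-linear and $D^{(r)}$ is cyclic, $\sigma$ maps $D^{(r)}_b$ to itself; hence, after replacing every coordinate $2$ by $1$, the code $B:=(D^{(r)}_b)_Y$ becomes a binary \emph{cyclic} code of length $\beta=2^{r-1}(2^r-1)$ and dimension $r$. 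I then split the $\beta$ coordinates into $m$ consecutive blocks of length $\alpha$, so that $\sigma^\alpha$ acts on $B$ as the cyclic rotation of these blocks (it permutes whole blocks without mixing coordinates inside a block precisely because $\alpha$ is the block length).

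The key step is the following cancellation. For any ${\bf z}\in D^{(r)}$ the words $\sigma^\alpha({\bf z})$ and ${\bf z}$ share the same $\Z_2$ part (the binary part has length $\alpha$, so $\sigma^\alpha$ returns it to itself); thus $\sigma^\alpha({\bf z})-{\bf z}$ has zero $\Z_2$ part and, by Proposition \ref{zeros}, lies in $D^{(r)}_b=2D^{(r)}$. Writing an arbitrary $w\in D^{(r)}_b$ as $w=2{\bf z}$ and using that $\sigma$ commutes with multiplication by $2$, I obtain $\sigma^\alpha(w)-w = 2\bigl(\sigma^\alpha({\bf z})-{\bf z}\bigr)\in 4D^{(r)}=\{0\}$. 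Hence $\sigma^\alpha(w)=w$ for every $w\in D^{(r)}_b$, which forces all $m$ blocks of $w$ to coincide. Therefore $B$ is the $m$-fold repetition $\{(c\mid c\mid\cdots\mid c):c\in C^*\}$ of the length-$\alpha$ binary code $C^*$ obtained by reading a single block.

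It remains to identify $C^*$. Since the repetition map is injective, $C^*$ is linear of dimension $r$ and length $2^r-1$; it is cyclic because $B$ is and $\sigma$ carries a repeated word to the repetition of the shifted block. By Lemma \ref{PesConstant} every nonzero $w\in D^{(r)}_b$ has weight $2^{2r-1}$, i.e.\ exactly $2^{2r-2}$ entries equal to $2$; these are distributed equally among the $m$ identical blocks, so each block has weight $2^{2r-2}/2^{r-1}=2^{r-1}$. Thus $C^*$ is a binary cyclic code of length $2^r-1$ and dimension $r$ all of whose nonzero words have weight $2^{r-1}$, and by the classical fact that such a constant-weight linear code is a simplex code, $C^*$ is a copy of $S_r$. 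Consequently $B$ consists of $2^{r-1}$ copies of $S_r$, as claimed.

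I expect the only delicate point to be the cancellation $\sigma^\alpha(w)=w$: it is what upgrades the mere $\sigma$-invariance of $B$ (which by itself only gives a cyclic code, not a repetition) into genuine block-constancy, and it relies essentially on the type $(\alpha,\beta;0,r;0)$ through the identity $D^{(r)}_b=2D^{(r)}$ together with $4D^{(r)}=\{0\}$. Once this is in place, the equal splitting of the weight across blocks and the constant-weight characterization make the identification with $S_r$ routine.
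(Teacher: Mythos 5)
Your proof is correct, and it takes a genuinely different route from the paper's. The paper applies Bonisoli's theorem directly to the full length-$\beta$ binary code obtained from $(D^{(r)}_b)_Y$ by the substitution $2\mapsto 1$: being linear, constant-weight and of dimension $r$, that code must be a replication of a simplex code, and cyclicity is then invoked to conclude it is a replication of $S_r$. You instead prove block-constancy algebraically first: since the type is $(\alpha,\beta;0,r;0)$ you have $D^{(r)}_b=2D^{(r)}$, and combining $\sigma^\alpha({\bf z})-{\bf z}\in D^{(r)}_b$ (via Proposition \ref{zeros}) with $4D^{(r)}=\{\zero\}$ gives $\sigma^\alpha(w)=w$ for every $w\in D^{(r)}_b$, so the $2^{r-1}$ consecutive length-$\alpha$ blocks of each codeword literally coincide; only then do you invoke a constant-weight characterization, and only for a single block of length $2^r-1$ and dimension $r$, where the simplex parameters are extremal. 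What your argument buys is precisely the step the paper leaves implicit: Bonisoli identifies the code as a replicated simplex only up to coordinate permutation, whereas the statement (and its later use in Corollary \ref{PesosFiles} and Proposition \ref{NoCoincidencies}) needs the replication to be aligned with the consecutive blocks of length $\alpha$; your $\sigma^\alpha$-invariance argument establishes that alignment directly from the group structure. You have not eliminated the constant-weight ingredient --- the final identification of the block code with a simplex still rests on Bonisoli or on the elementary observation that a $[2^r-1,r]$ binary code of constant weight $2^{r-1}$ must have all $2^r-1$ distinct nonzero columns in its generator matrix --- but you have localized it to the shortest possible length, where it is routine. One cosmetic point: your notation $C^*$ for the single-block code collides with the paper's $\C^*$ from Example \ref{exemple}; rename it to avoid confusion.
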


\begin{IEEEproof}
Clearly, when we change the twos by ones in $(D^{(r)}_b)_Y$, we obtain a binary linear cyclic code $D$ with constant weight and dimension $r$. By \cite{Boni}, $D$ must be a simplex code or a replication of a simplex code. Since the dimension is $r$, we conclude that $D$ is a replication of a simplex code of length $2^r-1$. Moreover, since $(D^{(r)}_b)_Y$ is cyclic, $D$ is a replication of $S_r$.
\end{IEEEproof}

\bigskip

Therefore, if $D^{(r)}$ is $\Z_2\Z_4$-cyclic, any order 4 codeword is of the form:
$$
{\bf z}=(x_1,\ldots,x_\alpha\mid y^{(1)},\ldots,y^{(2^{r-1})}),
$$
where $y^{(i)}=(y^{(i)}_1,\ldots,y^{(i)}_\alpha)$, for all $i=1,\ldots,2^{r-1}$. The set of coordinate positions of $y^{(i)}$ will be called the $i$th {\em block}. Taking into account that $2{\bf z}\in D^{(r)}_b$ and by Proposition \ref{SonSimplexs}, we see that ${\bf z}$
has $2^{r-1}$ odd coordinates (i.e. coordinates from $\{1,3\}$) in any block at the same positions. In other words, $y^{(i)}\equiv y^{(j)} \pmod{2}$, for all $i,j=1,\ldots,2^{r-1}$.

\begin{corollary}\label{PesosFiles}
Let ${\bf z}=(x_1,\ldots,x_\alpha\mid y^{(1)},\ldots,y^{(2^{r-1})})\in D^{(r)}$ be an order 4 codeword and assume that $D^{(r)}$ is $\Z_2\Z_4$-cyclic. Then, $(y^{(1)},\ldots,y^{(2^{r-1})})$ has:
\begin{eqnarray*}
2^{2r-2} \;\;\;\; & \mbox{ odd coordinates} \\
2^{r-2}(2^{r-1}-1) \;\;\;\; & \mbox{ twos, and} \\
2^{r-2}(2^{r-1}-1) \;\;\;\; & \mbox{ zeroes.}
\end{eqnarray*}
\end{corollary}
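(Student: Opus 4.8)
The plan is to reduce the statement to a small linear bookkeeping problem in three unknowns---the number $N_{\mathrm{odd}}$ of odd coordinates, the number $N_2$ of twos, and the number $N_0$ of zeroes in the $\Z_4$ part $(y^{(1)},\ldots,y^{(2^{r-1})})$---and then solve it using the constant-weight value from Lemma \ref{PesConstant} together with the block structure already established before the corollary. Note that these three counts are constrained by $N_{\mathrm{odd}}+N_2+N_0=\beta=2^{2r-1}-2^{r-1}$ and by the total weight of ${\bf z}$, so two independent facts suffice to determine all of them.

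First I would count the odd coordinates directly. By the paragraph preceding the corollary, each block $y^{(i)}$ has exactly $2^{r-1}$ odd coordinates: these are precisely the positions where $2{\bf z}\in D^{(r)}_b$ is nonzero within that block, and by Proposition \ref{SonSimplexs} each block of $2{\bf z}$ (after replacing twos by ones) is a nonzero codeword of $S_r$, hence of weight $2^{r-1}$. Since there are $2^{r-1}$ blocks, the total is $N_{\mathrm{odd}}=2^{r-1}\cdot 2^{r-1}=2^{2r-2}$, which is the first line. Next I would fix the weight contribution of the $\Z_2$ part. Because ${\bf z}$ has order $4$, it does not lie in $D^{(r)}_b$, so by Proposition \ref{zeros} its $\Z_2$ part $x$ is nonzero; and since $(D^{(r)})_X=S_r$ by that same proposition, $x$ is a nonzero simplex codeword, whence $w(x)=2^{r-1}$. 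The Lee weight of the $\Z_4$ part is $N_{\mathrm{odd}}+2N_2=2^{2r-2}+2N_2$ (each odd coordinate contributes $1$ and each two contributes $2$), so Lemma \ref{PesConstant} gives $w({\bf z})=2^{r-1}+2^{2r-2}+2N_2=2^{2r-1}$. Solving yields $N_2=2^{r-2}(2^{r-1}-1)$, and subtracting $N_{\mathrm{odd}}$ and $N_2$ from $\beta$ gives the same value for $N_0$, completing the three lines.

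The only point requiring care---and what I would regard as the crux rather than an obstacle---is the determination $w(x)=2^{r-1}$, since it uses both halves of Proposition \ref{zeros}: that $(D^{(r)})_X$ equals $S_r$ exactly (so its nonzero words have constant weight $2^{r-1}$) and that an order $4$ codeword cannot lie in $D^{(r)}_b$ (so $x\neq \zero$). Once this and the constant-weight value $2^{2r-1}$ are in hand, the remainder is a routine weight count with no real difficulty, and the arithmetic identities $2^{2r-1}-2^{r-1}-2^{2r-2}=2^{2r-2}-2^{r-1}=2\cdot 2^{r-2}(2^{r-1}-1)$ make the final three counts consistent with $N_{\mathrm{odd}}+N_2+N_0=\beta$.
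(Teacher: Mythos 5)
Your proposal is correct and is essentially the argument the paper intends: the paper's proof is just the one-line citation of Lemma \ref{PesConstant}, Proposition \ref{zeros} and Proposition \ref{SonSimplexs}, and you combine exactly those three ingredients in the natural way (each block contributes $2^{r-1}$ odd coordinates, $w(x)=2^{r-1}$ since $x$ is a nonzero simplex word, and the constant weight $2^{2r-1}$ then pins down $N_2$ and $N_0$). No gaps; the arithmetic checks out.
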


\begin{IEEEproof}
The result follows from Lemma \ref{PesConstant}, Proposition \ref{zeros} and Proposition \ref{SonSimplexs}.
\end{IEEEproof}

\bigskip

For any binary vector $x=(x_1,\ldots,x_m)$, the support of $x$ is the set of nonzero positions, $supp(x)=\{i\mid x_i\neq 0\}$. Note that $w(x)=|supp(x)|$. We define $\overline{supp}(x)=\{1,\ldots,m\}\setminus supp(x)$ as the complementary support of $x$.

\begin{lemma}\label{simplex}
Let $S_r$ be a cyclic simplex code of length $2^r-1$, with $r>2$. For any pair of codewords $x,y\in S_r$ we have that $|supp(x) \cap \overline{supp}(y)|$ is even. In other words, $x$ cannot have an odd number of nonzero positions in $\overline{supp}(y)$.
\end{lemma}

\begin{IEEEproof}
The distance between $x$ and $y$ must be $2^{r-1}$. Therefore,
$$
d(x,y)=|supp(x)|+|supp(y)|-2|supp(x)\cap supp(y)|=2^{r-1}.
$$
But the weight of any codeword is $2^{r-1}$. Thus,
$$
2^{r-1} + 2^{r-1} - 2|supp(x)\cap supp(y)| = 2^{r-1},
$$
implying that $|supp(x)\cap supp(y)|=2^{r-2}$, which is even for $r>2$. Hence, $|supp(x)\cap \overline{supp}(y)|$ is also even for $r>2$.
\end{IEEEproof}

\begin{proposition}\label{NoCoincidencies}
Suppose that $D^{(r)}$ is $\Z_2\Z_4$-cyclic and $r>2$. Let ${\bf z}=(x_1,\ldots,x_\alpha\mid y^{(1)},\ldots,y^{(2^{r-1})})\in D^{(r)}$ be an order 4 codeword. For any distinct $i,j$, define
$$
N_{i,j}=\{\ell\mid 1\leq \ell\leq\alpha,\;y^{(i)}_\ell,y^{(j)}_\ell\in\{0,2\},\;y^{(i)}_\ell\neq y^{(j)}_\ell\},
$$
i.e. $N_{i,j}$ is the set of coordinate positions where $y^{(i)}$ has a `2' and $y^{(j)}$ has `0' or vice versa. Then, $|N_{i,j}|$ is even.
\end{proposition}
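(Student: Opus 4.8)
The plan is to realize $N_{i,j}$ as the intersection of the support of one simplex codeword with the complementary support of another, so that Lemma~\ref{simplex} finishes the argument directly.

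First I would use the cyclic structure together with the identity $\beta = 2^{r-1}\alpha$. Since the $\Z_4$ part of length $\beta$ consists of $2^{r-1}$ consecutive blocks of length $\alpha$, the shift $\sigma^\alpha$ acts as the identity on the $\Z_2$ part $x$ (a full cyclic rotation of a length-$\alpha$ vector) and cyclically permutes the blocks, carrying the content of block $m$ to block $m+1$ (indices modulo $2^{r-1}$) while preserving the position inside each block. Iterating, $\sigma^{(i-j)\alpha}({\bf z}) \in D^{(r)}$ has the same $\Z_2$ part $x$, and its $i$th block equals $y^{(j)}$.

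Next I would set ${\bf w} = {\bf z} - \sigma^{(i-j)\alpha}({\bf z})$. As $D^{(r)}$ is a group, ${\bf w} \in D^{(r)}$, and its $\Z_2$ part is $x - x = 0$; hence ${\bf w} \in D^{(r)}_b$ by Proposition~\ref{zeros}. Thus every coordinate of ${\bf w}$ lies in $\{0,2\}$, and by Proposition~\ref{SonSimplexs} the vector obtained from $({\bf w})_Y$ by replacing every $2$ with $1$ is a replication of $S_r$. In particular the $i$th block of ${\bf w}$ is $d := y^{(i)} - y^{(j)} \in \{0,2\}^\alpha$, whose $2\mapsto 1$ image is some codeword $c \in S_r$ with $supp(c)$ equal to the set of positions where $d$ is $2$. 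Applying Proposition~\ref{SonSimplexs} to $2{\bf z} \in D^{(r)}_b$ in the same way, the common set of odd positions of the blocks of ${\bf z}$ is $supp(s)$ for some $s \in S_r$.

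Finally I would identify $N_{i,j}$ explicitly: a position $\ell$ satisfies $y^{(i)}_\ell, y^{(j)}_\ell \in \{0,2\}$ precisely when $\ell \in \overline{supp}(s)$, and for such $\ell$ one has $y^{(i)}_\ell \neq y^{(j)}_\ell$ exactly when $d_\ell = 2$, that is, $\ell \in supp(c)$. Hence $N_{i,j} = supp(c) \cap \overline{supp}(s)$ with $c, s \in S_r$, and since $r > 2$ Lemma~\ref{simplex} gives that $|N_{i,j}|$ is even. I expect the main obstacle to be the first step: noticing that a shift by exactly $\alpha$ positions relates distinct blocks while fixing the $\Z_2$ part, which is what forces the block difference $y^{(i)} - y^{(j)}$ into $D^{(r)}_b$ and hence into a replication of $S_r$. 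Once both $c$ and the odd pattern $s$ are recognized as simplex codewords, the conclusion is immediate; the rest is only the routine coordinate bookkeeping identifying $N_{i,j}$ with $supp(c) \cap \overline{supp}(s)$.
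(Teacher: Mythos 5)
Your proof is correct and follows essentially the same route as the paper's: shift ${\bf z}$ by a multiple of $\alpha$ so that block $j$ is superposed on block $i$ while the $\Z_2$ part is fixed, use Proposition~\ref{zeros} to conclude the sum/difference is an order-two codeword, and identify $N_{i,j}$ as $supp(c)\cap\overline{supp}(s)$ for two codewords of $S_r$ so that Lemma~\ref{simplex} applies. The paper argues by contradiction and works with ${\bf v}+{\bf z}$ in block $j$ rather than a difference in block $i$, but these are cosmetic variations; your explicit identification of $N_{i,j}$ with $supp(c)\cap\overline{supp}(s)$ actually spells out the step the paper leaves implicit.
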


\begin{IEEEproof}
Suppose to the contrary that $|N_{i,j}|$ is odd. Assume that $i<j$ and consider the codeword ${\bf v}=\sigma^{\alpha(j-i)}({\bf z})$. Clearly, ${\bf u}={\bf v} + {\bf z}$ has the zero vector in the $\Z_2$ part. Thus, by Proposition \ref{zeros}, ${\bf u}$ is an order two codeword. Now, comparing with the codeword $2{\bf v}$ (or $2{\bf z}$), we can see that ${\bf u}$ has an odd number of twos in $\overline{supp}(2{\bf v})$ in the $j$th block, contradicting Lemma \ref{simplex}.
\end{IEEEproof}

\bigskip

As a consequence, we obtain that in any order 4 codeword, the number of twos in any block has the same parity.

\begin{corollary}\label{paritats}
Suppose that $D^{(r)}$ is $\Z_2\Z_4$-cyclic and $r>2$. Let $(x_1,\ldots,x_\alpha\mid y^{(1)},\ldots,y^{(2^{r-1})})\in D^{(r)}$ be an order 4 codeword. Put $\eta_k(y)=|\{\ell\mid 1\leq\ell\leq\alpha,\; y^{(k)}_\ell =2\}|$. Then, $\eta_1(y),\ldots,\eta_{2^{r-1}}(y)$ have all the same parity.
\end{corollary}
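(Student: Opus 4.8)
The plan is to reduce the statement directly to Proposition~\ref{NoCoincidencies} by recognizing each set $N_{i,j}$ as a symmetric difference of $2$-supports. First I would invoke the structural fact established in the discussion following Proposition~\ref{SonSimplexs}: since $2{\bf z}\in D^{(r)}_b$, the odd coordinates (those in $\{1,3\}$) occur at the same positions in every block, so $y^{(i)}\equiv y^{(j)}\pmod 2$ for all $i,j$. Consequently there is a single common set $E\subseteq\{1,\ldots,\alpha\}$ of \emph{even positions} at which every block takes a value in $\{0,2\}$; on its complement every block is odd, and a `$2$' can only sit in a position of $E$.

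Next, for each block $k$ I would set $A_k=\{\ell\in E : y^{(k)}_\ell=2\}$, so that $\eta_k(y)=|A_k|$ by definition. With this notation, $N_{i,j}$, the set of positions where one of $y^{(i)},y^{(j)}$ equals `$2$' and the other `$0$', is exactly the symmetric difference $A_i\triangle A_j$: at any position outside $E$ both entries are odd and so contribute nothing, while at a position of $E$ the two entries differ precisely when exactly one of them equals $2$.

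The key computation is then the elementary parity identity
$$
|N_{i,j}|=|A_i\triangle A_j|=|A_i|+|A_j|-2\,|A_i\cap A_j|\equiv \eta_i(y)+\eta_j(y)\pmod 2.
$$
By Proposition~\ref{NoCoincidencies} the left-hand side is even, whence $\eta_i(y)\equiv\eta_j(y)\pmod 2$ for every pair $i\neq j$. This is exactly the assertion that $\eta_1(y),\ldots,\eta_{2^{r-1}}(y)$ all share a common parity.

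I expect no genuine obstacle here: once $N_{i,j}$ is identified as a symmetric difference, the conclusion is immediate from the inclusion–exclusion count of $|A_i\triangle A_j|$. The only point demanding care is the identification itself, namely justifying that positions outside $E$ make no contribution to $N_{i,j}$, and this rests entirely on the mod-$2$ agreement of the blocks, which is already in hand. In effect this corollary merely repackages Proposition~\ref{NoCoincidencies} in the language of the block weights $\eta_k(y)$.
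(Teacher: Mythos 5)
Your proof is correct and follows exactly the route the paper intends: the paper's own proof of this corollary is simply ``Straightforward from Proposition~\ref{NoCoincidencies},'' and your symmetric-difference computation $|N_{i,j}|=|A_i|+|A_j|-2|A_i\cap A_j|\equiv\eta_i(y)+\eta_j(y)\pmod 2$ is precisely the omitted justification. The identification of $N_{i,j}$ with $A_i\triangle A_j$ via the mod-$2$ agreement of the blocks is handled correctly.
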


\begin{IEEEproof}
Straightforward from Proposition \ref{NoCoincidencies}.
\end{IEEEproof}

\bigskip

\begin{lemma}\label{dosos}
Suppose that $D^{(r)}$ is $\Z_2\Z_4$-cyclic and $r>2$. As before, let ${\bf z}=(x_1,\ldots,x_\alpha\mid y^{(1)},\ldots,y^{(2^{r-1})})\in D^{(r)}$ be an order 4 codeword. Then, there exist different $k,k'\in\{1,\ldots,2^{r-1}\}$ such that $\eta_k(y) \neq \eta_{k'}(y)$. Moreover, if for some $\ell\in\{1,\ldots,\alpha\}$ we have $y^{(k)}_\ell=0$ and $y^{(k')}_\ell=2$, then
\begin{eqnarray*}
|\{i\mid 1\leq i \leq 2^{r-1},\;y^{(i)}_\ell=0\}| &=& \\
|\{j\mid 1\leq j \leq 2^{r-1},\;y^{(j)}_\ell=2\}| &=& 2^{r-2}.
\end{eqnarray*}
\end{lemma}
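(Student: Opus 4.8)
The plan is to handle the two assertions separately. For the first, that some $\eta_k(y)$ differ, I would argue by a counting/parity contradiction. By Corollary \ref{PesosFiles}, the $Y$-part of ${\bf z}$ contains exactly $2^{r-2}(2^{r-1}-1)$ twos, distributed over the $2^{r-1}$ blocks. If all the $\eta_k(y)$ were equal, their common value would be $2^{r-2}(2^{r-1}-1)/2^{r-1} = (2^{r-1}-1)/2$, which is not an integer because $2^{r-1}-1$ is odd. Hence the $\eta_k(y)$ cannot all coincide, and there exist $k,k'$ with $\eta_k(y)\neq\eta_{k'}(y)$.

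For the ``moreover'' part the key device is the block shift $\sigma^{\alpha}$. Since $\beta=2^{r-1}\alpha$, the shift $\sigma^{\alpha}$ acts as the identity on the $\alpha=2^r-1$ binary coordinates while cyclically permuting the $Y$-blocks, fixing the within-block position of each entry; concretely, block $m$ of $\sigma^{\alpha}({\bf z})$ equals $y^{(m-1)}$ (indices taken modulo $2^{r-1}$, so $y^{(0)}=y^{(2^{r-1})}$). Therefore ${\bf u}=\sigma^{\alpha}({\bf z})+{\bf z}$ has the zero vector in its $\Z_2$ part, so by Proposition \ref{zeros} we have ${\bf u}\in D^{(r)}_b$. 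By Proposition \ref{SonSimplexs}, replacing twos by ones in $(D^{(r)}_b)_Y$ yields a replication of $S_r$, which means that all the $Y$-blocks of any order two codeword are identical; in particular all the blocks of ${\bf u}$ agree.

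Now fix an even position $\ell$, that is, a coordinate outside the common odd support of the blocks, so that every $y^{(i)}_\ell\in\{0,2\}$, and record the binary profile $p=(p_1,\ldots,p_{2^{r-1}})$ with $p_m=1$ exactly when $y^{(m)}_\ell=2$. The value of block $m$ of ${\bf u}$ at position $\ell$ is $y^{(m-1)}_\ell+y^{(m)}_\ell$, which equals $2$ precisely when $p_{m-1}\neq p_m$ and $0$ otherwise. Since all blocks of ${\bf u}$ coincide, $p_{m-1}\oplus p_m$ is constant in $m$, so $p$ is either constant or the two-periodic alternating vector. Under the hypothesis $y^{(k)}_\ell=0$ and $y^{(k')}_\ell=2$ the profile is non-constant, hence it must alternate; an alternating vector of even length $2^{r-1}$ has exactly $2^{r-1}/2=2^{r-2}$ ones and $2^{r-2}$ zeros, giving precisely the two claimed counts.

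I expect the main obstacle to be the second step: checking carefully that $\sigma^{\alpha}$ induces a clean cyclic permutation of the blocks, fixing within-block positions and acting trivially on $X$, so that ${\bf u}$ indeed lands in $D^{(r)}_b$ and inherits the replication structure of Proposition \ref{SonSimplexs}. Once this is established, the reduction to the single relation $p_{m-1}\oplus p_m=\text{const}$ and its resolution into the constant or alternating cases is routine, and note that $r>2$ guarantees $2^{r-1}$ is even so that the alternating profile is cyclically consistent.
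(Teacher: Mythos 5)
Your proof is correct, and the second half takes a genuinely different route from the paper. The first assertion is handled identically in both (the total number of twos, $2^{r-2}(2^{r-1}-1)$, is not divisible by the number of blocks $2^{r-1}$). For the ``moreover'' part, the paper picks an \emph{adjacent} pair of blocks with differing $\eta$'s, shifts ${\bf z}$ so that a disagreeing $\{0,2\}$-coordinate sits in the last position, and then applies Corollary~\ref{paritats} repeatedly: each single shift flips the parity of $\eta_{2^{r-1}}$, which forces every other $\eta_k$ to flip parity too, propagating the disagreement backwards block by block and yielding the half-and-half split at that coordinate. You instead form ${\bf u}={\bf z}+\sigma^{\alpha}({\bf z})$, place it in $D^{(r)}_b$ via Proposition~\ref{zeros}, and invoke the replication structure of Proposition~\ref{SonSimplexs} to conclude that the cyclic first difference $p_{m-1}\oplus p_m$ of the $\{0,2\}$-profile at a fixed within-block position is constant, so the profile is constant or alternating. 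This is the same device (block-shift differences landing in $D^{(r)}_b$) that the paper uses to prove Proposition~\ref{NoCoincidencies}, applied here directly; it buys you a coordinatewise argument that needs no repositioning shifts and bypasses Corollary~\ref{paritats} and Lemma~\ref{simplex} entirely (indeed it only uses that $2^{r-1}$ is even, so it would even cover $r=2$). The paper's version is shorter on the page but leans on the parity corollary and on a slightly delicate ``after some shifts'' step; yours is more self-contained and, in my view, cleaner. One small point worth making explicit in a final write-up: the hypothesis $y^{(k)}_\ell=0$, $y^{(k')}_\ell=2$ together with the common odd support of the blocks (established after Proposition~\ref{SonSimplexs}) is what guarantees that $\ell$ is an ``even position'' so that the profile $p$ is defined on all $2^{r-1}$ blocks.
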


\begin{IEEEproof}
The total number of twos in ${\bf z}$ is $2^{r-2}(2^{r-1}-1)$ (see Corollary \ref{PesosFiles}). But this number is not divisible by $2^{r-1}$ and hence not all the blocks have the same number of twos. This proves that $\eta_k(y) \neq \eta_{k'}(y)$ for some $k,k'\in\{1,\ldots,2^{r-1}\}$.

Let $k$ and $k'=k+1$ be such that $\eta_k(y) \neq \eta_{k'}(y)$. Without loss of generality, we assume that $k'=2^{r-1}$ and $k=2^{r-1}-1$. After some shifts of ${\bf z}$, we can get the situation that $y^{(k)}_\alpha\neq y^{(k')}_\alpha$, where $y^{(k)}_\alpha,y^{(k')}_\alpha\in\{0,2\}$. That is, the last coordinates of the last two blocks are in $\{0,2\}$ and different from each other. Now, if we shift the codeword, $\eta_{2^{r-1}}(y)$ changes its parity. Hence, by Corollary \ref{paritats}, $\eta_{2^{r-1}-1}(y)$ must change its parity as well, implying that $y^{(2^{r-1}-2)}_\alpha \neq y^{(2^{r-1}-1)}_\alpha$ and $y^{(2^{r-1}-2)}_\alpha, y^{(2^{r-1}-1)}_\alpha \in\{0,2\}$. With the same argument, $y^{(2^{r-1}-3)}_\alpha \neq y^{(2^{r-1}-2)}_\alpha$, $y^{(2^{r-1}-3)}_\alpha, y^{(2^{r-1}-2)}_\alpha\in\{0,2\}$, and so on. Therefore, in this last coordinate, half of the blocks have a `0' and half of the blocks have a `2'.
\end{IEEEproof}

\bigskip

Now, we are ready to prove the nonexistence of a $\Z_2\Z_4$-cyclic code $D^{(r)}$ for $r>2$.

\begin{theorem}
There is no $\Z_2\Z_4$-cyclic 1-perfect code $\C$ such that $C=\Phi(\C)$ is nonlinear except for the case when $\C=\C^*$ is the code of Example \ref{exemple} of type $(3,6;3,4;3)$, which is a $\Z_2\Z_4$-cyclic code.
\end{theorem}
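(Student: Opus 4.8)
The plan is to reduce the whole statement to a single parity obstruction on one order‑$4$ codeword of $D^{(r)}=\C^{\perp}_{r,2r}$ when $r>2$. First I would clear the easy reductions. By Proposition~\ref{multiple} a $\Z_2\Z_4$-cyclic $\C_{r,t}$ forces $t=r$ or $t=2r$; the case $t=r$ gives a linear (Hamming) Gray image, so for a nonlinear image only $t=2r$ can occur. Since the dual of a $\Z_2\Z_4$-cyclic code is again $\Z_2\Z_4$-cyclic \cite{Abu}, $\C_{r,2r}$ is cyclic if and only if $D^{(r)}$ is cyclic, and the case $r=2$ is precisely the code $\C^{*}=\C_{2,4}$ of Example~\ref{exemple}, which is cyclic. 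Hence the entire theorem comes down to showing that $D^{(r)}$ is \emph{not} $\Z_2\Z_4$-cyclic for $r>2$.

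The key structural step is to show, assuming $D^{(r)}$ cyclic, that every order‑$4$ codeword has only \emph{two} distinct blocks. Fix an order‑$4$ codeword ${\bf z}=(x\mid y^{(1)},\ldots,y^{(2^{r-1})})$ and form ${\bf w}={\bf z}-\sigma^{\alpha}({\bf z})$. Since the block length is $\alpha=2^{r}-1$, the shift $\sigma^{\alpha}$ fixes the $\Z_2$ part and merely rotates the blocks, so block $i$ of $\sigma^{\alpha}({\bf z})$ is $y^{(i-1)}$. Thus ${\bf w}$ has zero $\Z_2$ part, so it lies in $D^{(r)}_{b}$ by Proposition~\ref{zeros}, and by Proposition~\ref{SonSimplexs} all its blocks equal one vector $c\in\{0,2\}^{\alpha}$. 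Consequently $y^{(i)}-y^{(i-1)}=c$ for all $i$, which telescopes to $y^{(i)}=y^{(1)}+(i-1)c\pmod 4$; as $c\in\{0,2\}^{\alpha}$ we have $2c\equiv 0\pmod 4$, whence $y^{(i+2)}=y^{(i)}$. So the blocks are $2$-periodic and take only the two values $A:=y^{(1)}=y^{(3)}=\cdots$ and $B:=y^{(2)}=y^{(4)}=\cdots$, each occurring $2^{r-2}$ times. (This refines Lemma~\ref{dosos}, which already forces $A\neq B$.)

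The contradiction is then a one‑line parity count. Writing $\eta(A)$, $\eta(B)$ for the number of coordinates equal to $2$ in $A$, $B$, Corollary~\ref{PesosFiles} gives that the $Y$-part of ${\bf z}$ has exactly $2^{r-2}(2^{r-1}-1)$ twos; counting these through the two‑block decomposition yields $2^{r-2}\bigl(\eta(A)+\eta(B)\bigr)=2^{r-2}(2^{r-1}-1)$, so $\eta(A)+\eta(B)=2^{r-1}-1$, which is \emph{odd}. On the other hand, Corollary~\ref{paritats}, valid exactly because $r>2$, forces $\eta_1(y),\ldots,\eta_{2^{r-1}}(y)$ to share a common parity, so $\eta(A)\equiv\eta(B)\pmod 2$ and $\eta(A)+\eta(B)$ is \emph{even} — a contradiction. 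Therefore $D^{(r)}$ is not cyclic for $r>2$, and $\C^{*}$ is the only nonlinear $\Z_2\Z_4$-cyclic $1$-perfect code.

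The step I expect to be the main obstacle is establishing the two‑block periodicity cleanly: one must read off correctly that $\sigma^{\alpha}$ acts trivially on the $\Z_2$ part and only rotates blocks, and, crucially, that the order‑$2$ codeword ${\bf w}$ has genuinely \emph{identical} blocks (the replication structure from Proposition~\ref{SonSimplexs}), not merely blocks that agree modulo $2$. Once $A$ and $B$ are isolated, the final count is immediate, and it is reassuring that the argument collapses precisely at $r=2$, where Corollary~\ref{paritats} no longer applies and $\eta(A)+\eta(B)=1$ is consistently realized by $\C^{*}$.
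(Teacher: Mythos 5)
Your proposal is correct, and the reductions (via Proposition~\ref{multiple}, duality, and Example~\ref{exemple} for $r=2$) match the paper exactly; the difference lies in how the final contradiction for $r>2$ is extracted. The paper fixes an order-$4$ codeword ${\bf z}\in D^{(r)}$, invokes Lemma~\ref{dosos} to show that at every coordinate position where two blocks disagree within $\{0,2\}$ exactly half the blocks carry a `2', splits the positions into the counts $\lambda$ and $\mu$, deduces $2\lambda+\mu=2^{r-1}-1$ from Corollary~\ref{PesosFiles}, and contradicts the evenness of the sets $N_{i,j}$ from Proposition~\ref{NoCoincidencies}. You instead apply the single shift $\sigma^{\alpha}$ to ${\bf z}$, observe that ${\bf z}-\sigma^{\alpha}({\bf z})$ has zero $\Z_2$ part and hence (by Propositions~\ref{zeros} and~\ref{SonSimplexs}) constant blocks $c\in\{0,2\}^{\alpha}$, and telescope to get the $2$-periodic block structure $y^{(i)}=y^{(1)}+(i-1)c$; the count $2^{r-2}(\eta(A)+\eta(B))=2^{r-2}(2^{r-1}-1)$ then clashes directly with the common parity of the $\eta_k$ from Corollary~\ref{paritats}. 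This buys you something concrete: you bypass Lemma~\ref{dosos} entirely, and your contradiction does not require arguing that the \emph{union} of the even-sized sets $N_{i,j}$ (which is what $\mu$ counts) is itself even --- a step that in the paper's version leans on the half-and-half structure of Lemma~\ref{dosos} --- since in your setting the only relevant set is $N_{1,2}$ and only its parity via Corollary~\ref{paritats} is used. Both arguments rest on the same substantive inputs (the replication structure of $(D^{(r)}_b)_Y$, the constant-weight count, and the simplex intersection parity of Lemma~\ref{simplex}, which is where $r>2$ enters), so your route is a genuine but compatible streamlining rather than a new method; as you note, it also does not need $A\neq B$, since $\eta(A)+\eta(B)$ would be even in that case too.
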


\begin{IEEEproof}
Assume that $\C$ is a $\Z_2\Z_4$-cyclic 1-perfect code such that $C=\Phi(\C)$ is nonlinear. By Proposition \ref{multiple}, $\C$ must be a code $\C_{r,2r}$. If $r=2$, then we have seen the $\Z_2\Z_4$-cyclic code $\C^*=\C_{2,4}$ in Example \ref{exemple}. Suppose now that $r>2$.

Let ${\bf z}=(x_1,\ldots,x_\alpha\mid y^{(1)},\ldots,y^{(2^{r-1})})\in \C^\perp$ be an order 4 codeword.
Define
\begin{eqnarray*}
\lambda &=& \left|\left\{\ell\mid 1\leq\ell\leq\alpha,\; y^{(i)}_\ell=2, \;\;\forall i=1,\ldots,2^{r-1}\right\}\right|, \mbox{ and }\\
\mu     &=& \left|\left\{\ell\mid 1\leq\ell\leq\alpha,\mbox{ such that } \exists\; k,k' \mbox{ with } y^{(k)}_\ell\neq y^{(k')}_\ell;\right.\right. \\
 &&\left.\left. \;\;\;\;y^{(k)}_\ell,y^{(k')}_\ell\in\{0,2\}\right\}\right|.
\end{eqnarray*}

Then, by Lemma \ref{dosos}, the number of twos in ${\bf z}$ is
$2^{r-1}\lambda + 2^{r-2}\mu$. We have seen in Corollary \ref{PesosFiles} that this must equal $2^{r-2}(2^{r-1}-1)$. Thus, we obtain
$$
2\lambda + \mu = 2^{r-1}-1,
$$
implying that $\mu$ is an odd number. But this is a contradiction with Proposition \ref{NoCoincidencies}.
\end{IEEEproof}

\section{The nonexistence of nontrivial $\Z_2\Z_4$-cyclic extended perfect codes}\label{estesos}

Given a $\Z_2\Z_4$-additive 1-perfect code $\C_{r,t}$ ($2\leq r\leq t\leq 2r$), we denote by $\C_{r,t}'$ the extended code obtained by adding an even parity check coordinate (of course, at the $\Z_2$ part). Then, $\C_{r,t}'$ is a $\Z_2\Z_4$-additive extended 1-perfect code. Recall that $\C_{r,t}$ is of type
\small
$$
(2^r-1,2^{t-1}-2^{r-1}; 2^r-1-2r+t, 2^{t-1}-2^{r-1}+r-t; 2^r-1-2r+t).
$$
\normalsize
Since $|\C_{r,t}'|=|\C_{r,t}|$, $|(\C_{r,t}')_b|=|(\C_{r,t})_b|$, and $|((\C_{r,t}')_b)_X|=|((\C_{r,t}')_b)_X|$, we have that $\C_{r,t}'$ is of type
\small
$$
(2^r,2^{t-1}-2^{r-1}; 2^r-1-2r+t, 2^{t-1}-2^{r-1}+r-t; 2^r-1-2r+t).
$$
\normalsize

In this section, we prove that $\C_{r,t}'$ is not $\Z_2\Z_4$-cyclic for $t>2$. For this, we begin examining the case $r=2$. In such case, we have $t\in\{2,3,4\}$. The case $t=r=2$ corresponds to a binary linear cyclic code of length 4 and two codewords. Such code is the trivial repetition code of length 4. Hence, we consider the cases $t=3$ and $t=4$.

\begin{lemma}\label{DosConcrets}
The codes $\C_{2,3}'$ and $\C_{2,4}'$ are not $\Z_2\Z_4$-cyclic.
\end{lemma}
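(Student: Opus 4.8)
The plan is to assume, for contradiction, that $\C_{2,3}'$ (respectively $\C_{2,4}'$) is $\Z_2\Z_4$-cyclic and to produce a codeword of weight $2$, contradicting the minimum distance $4$ of an extended $1$-perfect code. First I would record the structural constraints such a code must satisfy. Doubling any order $4$ codeword $\mathbf{g}=(g\mid q_1,\ldots,q_\beta)$ yields a nonzero order $2$ codeword $2\mathbf{g}=(0\mid 2q_1,\ldots,2q_\beta)$, whose weight must be at least $4$; hence at least two of $q_1,\ldots,q_\beta$ are odd. For $\C_{2,3}'$, where $\beta=2$, this forces both quaternary entries of every order $4$ codeword to be odd and $2\mathbf{g}=(0,0,0,0\mid 2,2)$. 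Second, the covering argument of Proposition \ref{parametres} still produces, after adding the parity bit, the minimum weight order $2$ codewords $\mathbf{p}=(x,1\mid 2,0,\ldots,0)$ with $w(x)=1$ (one for each position of the single `$2$'). Together with $\sigma$, these will drive the argument.

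Next I would use the shift $\sigma$, of period $\operatorname{lcm}(\alpha,\beta)$, to generate further codewords from $\mathbf{p}$. Applying $\sigma^\beta$ returns the `$2$' to its original quaternary position while cyclically shifting the binary part, so $\mathbf{p}+\sigma^\beta(\mathbf{p})$ is a purely binary order $2$ codeword whose weight is $0$ or at least $4$; choosing the position of the nonzero coordinate of $x$ suitably, this forces the all-ones binary codeword $(1,1,1,1\mid 0,0)$ into $(\C_{2,3}')_b$. Combined with the first paragraph, an order $4$ codeword $\mathbf{g}=(g\mid q_1,q_2)$ with $g=(1,1,1,1)$ would then give the weight $2$ codeword $\mathbf{g}+(1,1,1,1\mid 0,0)=(0,0,0,0\mid q_1,q_2)$. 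The remaining work is to show that the cyclic closure of the order $4$ coset $\mathbf{g}+(\C_{2,3}')_b$ cannot avoid producing such a short word.

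For $\C_{2,3}'$ ($\alpha=4$, $\beta=2$) the analysis is finite: the order $2$ subcode is a $\sigma$-invariant code of dimension $3$ and minimum Lee weight $4$ containing $(0,0,0,0\mid 2,2)$ and $(1,1,1,1\mid 0,0)$, and the order $4$ part is a single coset $\mathbf{g}+(\C_{2,3}')_b$. I would enumerate the few such subcodes and, for each, impose $\sigma$-closure on the coset, i.e. $\sigma^i(\mathbf{g})-\mathbf{g}\in(\C_{2,3}')_b$ for all $i$. Since both quaternary entries of every order $4$ codeword are odd, these differences are purely binary, and $\sigma$-closure forces $\sigma(g)+g$ (and its iterates) to lie in the very small set $\{(0,0,0,0),(1,1,1,1)\}$ of binary parts available with zero quaternary component; the only $g$ with this property are exactly the binary parts already excluded by the minimum-distance condition, so no admissible generator exists. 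For $\C_{2,4}'$ ($\alpha=4$, $\beta=6$, type $(4,6;3,4;3)$) I would run the same scheme with $\sigma$ of order $12$, now using the constant-weight and simplex structure from Section \ref{principal} (Lemma \ref{PesConstant}, Propositions \ref{zeros} and \ref{SonSimplexs}) together with the cyclic presentation of $\C_{2,4}$ in Example \ref{exemple} to pin down the order $2$ and order $4$ parts, and then deriving the same weight $2$ contradiction.

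The main obstacle is the exhaustiveness of this finite analysis: one must verify that no $\sigma$-invariant choice of the order $2$ subcode together with an order $4$ generator simultaneously meets the type constraints, minimum distance $4$, and cyclic closure. The delicate point is that the forced all-ones-type order $2$ codewords interact with the order $4$ codewords—whose quaternary parts are forced to be odd in at least two coordinates—so as to unavoidably create a Lee weight $2$ codeword once $\sigma$-closure is imposed. For $\C_{2,4}'$ the larger period of $\sigma$ makes this bookkeeping the hardest step, and it is precisely here that the structural results on $D^{(r)}$ and simplex codes from the previous section are needed to keep the case analysis finite and tractable.
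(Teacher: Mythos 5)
Your proposal does not follow the paper's route, and as written it contains genuine gaps. The most concrete one is the claim that, because both quaternary entries of an order-$4$ codeword of $\C_{2,3}'$ are odd, the differences $\sigma^i(\mathbf{g})-\mathbf{g}$ are ``purely binary.'' The difference of two odd elements of $\Z_4$ is even but need not be zero (e.g.\ $3-1=2$), so these differences can carry $2$'s in the quaternary part, and your reduction of $\sigma$-closure to the tiny set $\{(0,0,0,0),(1,1,1,1)\}$ of available binary parts does not go through. Two further steps are shaky: the position of the nonzero coordinate of $x$ in the covering codeword $(x,1\mid 2,0)$ is dictated by the code, not by you, so you cannot ``choose it suitably'' to force $(1,1,1,1\mid 0,0)$ into $(\C_{2,3}')_b$; and the branch in which an order-$4$ codeword has binary part $(1,1,1,1)$ is vacuous, since the weight count you yourself set up (quaternary Lee weight $2$, total weight $4$ or $8$, $\alpha=4$) forces the binary part of every order-$4$ codeword of $\C_{2,3}'$ to have weight exactly $2$. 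Finally, the argument is ultimately deferred to a finite enumeration whose exhaustiveness you explicitly flag as ``the main obstacle,'' so the proof is not actually completed.

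For $\C_{2,4}'$ the appeal to Lemma \ref{PesConstant} and Propositions \ref{zeros} and \ref{SonSimplexs} is misplaced: those statements concern $D^{(r)}=\C_{r,2r}^\perp$ with its block structure $\beta=2^{r-1}\alpha$, and the key ones assume $r>2$, whereas $\C_{2,4}'$ has $\alpha=4$ and $\beta=6$; moreover $\C_{2,4}$ itself \emph{is} $\Z_2\Z_4$-cyclic, so no contradiction can be imported from the unextended code. The paper's proof is short and direct: for $\C_{2,3}'$ it shows that the binary part of any order-$4$ codeword must be $(1,0,1,0)$ or $(0,1,0,1)$ (by weighing $\mathbf{x}+\sigma(\mathbf{x})$) and then exhibits two order-$4$ codewords at distance $2$ or $6$; for $\C_{2,4}'$ it passes to the dual, whose nonzero codewords other than $(1,\ldots,1\mid 2,\ldots,2)$ all have weight $8$, and compares $\mathbf{x}+\sigma^4(\mathbf{x})$ with $\mathbf{x}-\sigma^4(\mathbf{x})$. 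Your weight-$2$ idea could perhaps be salvaged, but only after repairing the points above.
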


\begin{proof}
First, we consider the code $\C_{2,3}'$. The type of $\C_{2,3}'$ is $(4,2;2,1;2)$. Hence, $\C_{2,3}'$ contains 8 codewords of order 4. Let ${\bf x}=(x\mid x_1',x_2')$ be one such codeword. Since any codeword in $\C_{2,3}'$ has weight 4 or 8, it follows that $x_1'$ and $x_2'$ must be both odd coordinates (otherwise $2{\bf x}$ would have weight 2). Also, we have that $w(x)=2$. If we consider the codeword ${\bf x} + \sigma({\bf x})$, we can see that $x+\sigma(x)$ must have weight 4, implying that $x=(1,0,1,0)$ (or $x=(0,1,0,1)$). Now, take a codeword ${\bf y}=(y\mid y_1',y_2')$ such that $y_1'=x_1'$ and $y_2'\neq x_2'$ (a simple counting argument shows that exactly half of the codewords have equal the last two coordinates). We have that $d(x,y)\in\{0,4\}$ and hence $d({\bf x},{\bf y})\in\{2,6\}$, a contradiction.

The code $\C_{2,4}'$ is an extension of the code $\C^*$ in Example \ref{exemple}. Consider the dual code $\D=(\C_{2,4}')^\perp$. If $H$ is a generator matrix for $\C_{2,4}^\perp$, then a generator matrix for $\D$ can be obtained adding, first, a zero column to $H$ and, second, the row ${\bf f}=(1,\ldots,1\mid 2,\ldots,2)$. Hence, $\D$ is of type $(4,6;1,2;1)$ and any nonzero codeword ${\bf z}\neq {\bf f}$ has weight 8.  Let ${\bf x}$ be an order 4 codeword. Clearly, ${\bf x}$ must have 4 odd coordinates in the quaternary part (otherwise, $2{\bf x}$ would not have weight 8). This implies that ${\bf z}={\bf x}+\sigma^4({\bf x})$ is an order 4 vector. If $\D$ is cyclic, then ${\bf z}=(z\mid z')\in \D$. Note that ${\bf z}$ has zeros in all the binary positions, i.e. $z=(0,\ldots,0)$. Thus, $z'$ has 4 odd coordinates and two coordinates, say $z'_i$ and $z'_j$ equal to `2'. But note that $z'_i$ or $z'_j$ (or both) is obtained as the addition of two odd coordinates. Therefore, ${\bf x} - \sigma^4({\bf x})$ has weight less than 8, getting a contradiction.
\end{proof}

\bigskip

Now, we establish the main result of this section.

\begin{theorem}
If $\C'=\C_{r,t}'$ is a $\Z_2\Z_4$-additive extended 1-perfect code with $t\geq 3$, then $\C'$ is not $\Z_2\Z_4$-cyclic.
\end{theorem}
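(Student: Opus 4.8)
The plan is to assume $\C'=\C_{r,t}'$ is $\Z_2\Z_4$-cyclic and derive a contradiction, organising the argument by the relative sizes of $r$ and $t$. When $t=r$ we have $\beta=0$, so $\C'$ is a binary extended Hamming code of length $2^r\geq 8$ (here $r=t\geq 3$), which is not cyclic by property (b); this is already a contradiction. When $r=2$ we have $t\in\{3,4\}$, and Lemma~\ref{DosConcrets} gives the claim directly. Thus the genuine work lies in the range $r\geq 3$, $r<t\leq 2r$, which I would attack through the dual code, generalising the treatment of $\C_{2,4}'$ in Lemma~\ref{DosConcrets}.

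First I would pass to $\D=(\C')^\perp$, which is $\Z_2\Z_4$-cyclic because the dual of a $\Z_2\Z_4$-cyclic code is $\Z_2\Z_4$-cyclic. Using Proposition~\ref{ParamsDual} with the type of $\C'$, one computes that $\D$ has type $(2^r,\beta;1+2r-t,t-r;1+2r-t)$ and size $2^{t+1}$. Since the Gray image of $\C'$ is an extended $1$-perfect, hence even-weight, code, the all-ones vector lies in its binary dual, so ${\bf f}=(1,\ldots,1\mid 2,\ldots,2)\in\D$; and by the MacWilliams identity $\D$ inherits the weight distribution of a Hadamard code (the analogue of Lemma~\ref{PesConstant}): ${\bf f}$ is the unique nonzero codeword of weight $2^t$, and every other nonzero codeword has weight $2^{t-1}$. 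In particular, a nonzero ${\bf z}\in\D$ with zero $\Z_2$ part must have weight $2^{t-1}$, because ${\bf f}$ is the only codeword of larger weight and it has nonzero $\Z_2$ part.

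Next I would take an order-$4$ codeword ${\bf x}=(x\mid x')\in\D$. Then $2{\bf x}$ is nonzero with zero $\Z_2$ part, hence of weight $2^{t-1}$; as $w_L(2{\bf x})$ is twice the number of odd entries of $x'$, this forces $x'$ to have exactly $2^{t-2}$ odd coordinates and $w({\bf x})=2^{t-1}$. To exploit cyclicity I would apply a shift $\sigma^{k}$ that fixes the $\Z_2$ part (so $k$ is a multiple of $\alpha=2^r$) while moving the $\Z_4$ part; the choice $k=\alpha$ works precisely when $\beta$ does not divide $\alpha$, i.e.\ for $t\geq r+2$, whereas the degenerate case $t=r+1$ (where $\beta=2^{r-1}$ divides $2^r=\alpha$ and every order-$4$ codeword has all-odd $\Z_4$ part) requires instead the shift $\sigma^\beta$, which fixes the $\Z_4$ part and rotates the $\Z_2$ part by a half-period. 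In either case I would form ${\bf z}_{\pm}={\bf x}\pm\sigma^{k}({\bf x})$: both have zero $\Z_2$ part, both lie in $\D$ by cyclicity, and so each has weight $0$ or $2^{t-1}$. Comparing, coordinate by coordinate, the contributions of the odd$+$odd, odd$+$even and even$+$even positions to $w({\bf z}_+)$ and $w({\bf z}_-)$ against the count $2^{t-2}$ of odd positions and the forced value $2^{t-1}$, one should reach a numerical impossibility; this is exactly the bookkeeping used for $\C_{2,4}'$ in Lemma~\ref{DosConcrets}, where the counts collapse to produce a codeword of weight strictly below $2^{t-1}$.

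The step I expect to be the main obstacle is making this counting contradiction \emph{uniform} in $t$. The single-shift weight identity closes the boundary case $t=r+2$ cleanly (the numerics degenerate just as for $\C_{2,4}'$), but for $t\geq r+3$ the weight equations alone are satisfiable: they only force the numbers of ``$2$ produced by two equal odd entries'' and ``$2$ produced by two distinct odd entries'' to balance, so no contradiction follows from a single codeword. To finish I would descend to a finer, orbit-by-orbit parity analysis of the order-$4$ codewords over the $\sigma^\alpha$-orbits of the $\Z_4$ positions, playing the role that Propositions~\ref{NoCoincidencies}--\ref{dosos} and Lemma~\ref{simplex} play in Section~\ref{principal}, but measured now against the Hadamard constant-weight property rather than the simplex one. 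An alternative angle, which I would pursue in parallel, is to exploit that the $\Z_2$-projection of $\D$ is a binary cyclic code of the power-of-two length $2^r$, hence an ideal generated by a power of $(x+1)$ and therefore very rigid, and to play this rigidity off against the forced weight $2^{t-1}$. Reconciling the $t=r+1$ shift-$\sigma^\beta$ argument with the generic $t\geq r+2$ argument into a single uniform contradiction is where I expect the bulk of the difficulty to lie.
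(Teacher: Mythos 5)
Your proposal does not close: by your own admission, the dual-side counting argument only yields a contradiction for small $t-r$, and for $t\geq r+3$ you are left needing ``a finer, orbit-by-orbit parity analysis'' that you do not carry out. That unfinished step is not a technicality — it is the entire content of the theorem for general parameters, and it is not at all clear that the simplex-style machinery of Section~\ref{principal} (Lemma~\ref{simplex}, Propositions~\ref{NoCoincidencies}--\ref{dosos}) transfers: those arguments lean on $(D^{(r)})_X$ being a simplex code and on $\beta$ being a multiple of $\alpha$ with the quaternary part splitting into $\alpha$-blocks, a structure that has no analogue here since $\alpha=2^r$ and $\beta=2^{r-1}(2^{t-r}-1)$ are not commensurable in the same way. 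So what you have is a correct reduction of the easy cases ($t=r$, $r=2$) plus a program for the rest, not a proof.

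The gap is all the more striking because the theorem admits a two-line primal argument, which is what the paper does: let $\C_0'$ be the subcode of codewords with zero quaternary part. Every odd-weight vector $(v\mid 0,\ldots,0)$ lies at distance $1$ from a unique codeword of $\C'$, and that codeword cannot have a single odd quaternary coordinate (else its double would have weight $2$), so it lies in $\C_0'$. Hence $(\C_0')_X$ is a binary extended $1$-perfect, i.e.\ extended Hamming, code of length $\alpha=2^r$; if $\C'$ is $\Z_2\Z_4$-cyclic then $(\C_0')_X$ is cyclic, which by property (b) forces $\alpha=4$, i.e.\ $r=2$, and Lemma~\ref{DosConcrets} finishes. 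This sidesteps the dual code, the MacWilliams computation, and all shift-and-count bookkeeping. You correctly identified that the problem reduces to $r\geq 3$ and that cyclicity must be played off against a rigid classical code, but you aimed at the Hadamard-type rigidity of the dual rather than the (non-)cyclicity of the extended Hamming code sitting inside the primal, and that choice is what left the general case open.
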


\begin{proof}
Consider the subcode $\C_0'=\{(x\mid 0,\ldots,0)\}$. If $\C'$ is $\Z_2\Z_4$-cyclic, then clearly $(\C_0')_X$ is a binary linear cyclic code. For every vector ${\bf v}=(v\mid 0,\ldots,0)$ of odd weight, we have that ${\bf v}$ must be at distance 1 from one codeword in $\C'$. Since no codeword ${\bf z}$ can have only an odd coordinate in the $\Z_4$ part (otherwise $2{\bf z}$ would have weight 2), it follows that $v$ is at distance 1 from a codeword in $(\C_0')_X$. Therefore $\C_0'$ must be an extended Hamming code. But such code cannot be cyclic unless it has length 4 \cite{Just}. The result then follows by Lemma \ref{DosConcrets}.
\end{proof}

% biography section
%
% If you have an EPS/PDF photo (graphicx package needed) extra braces are
% needed around the contents of the optional argument to biography to prevent
% the LaTeX parser from getting confused when it sees the complicated
% \includegraphics command within an optional argument. (You could create
% your own custom macro containing the \includegraphics command to make things
% simpler here.)
%\begin{IEEEbiography}[{\includegraphics[width=1in,height=1.25in,clip,keepaspectratio]{mshell}}]{Michael Shell}
% or if you just want to reserve a space for a photo:

%\begin{IEEEbiographynophoto}{Joaquim Borges}
%Biography text here.
%\end{IEEEbiographynophoto}

% if you will not have a photo at all:
%\begin{IEEEbiographynophoto}{Cristina Fern\'andez-C\'ordoba}
%Biography text here.
%\end{IEEEbiographynophoto}

% insert where needed to balance the two columns on the last page with
% biographies
%\newpage

%\begin{IEEEbiographynophoto}{Roger Ten-Valls}
%Biography text here.
%\end{IEEEbiographynophoto}

% You can push biographies down or up by placing
% a \vfill before or after them. The appropriate
% use of \vfill depends on what kind of text is
% on the last page and whether or not the columns
% are being equalized.

%\vfill

% Can be used to pull up biographies so that the bottom of the last one
% is flush with the other column.
%\enlargethispage{-5in}

% that's all folks

\begin{thebibliography}{1}

%\bibitem{IEEEhowto:kopka}
%H.~Kopka and P.~W. Daly, \emph{A Guide to \LaTeX}, 3rd~ed.\hskip 1em plus
%  0.5em minus 0.4em\relax Harlow, England: Addison-Wesley, 1999.

\bibitem{Abu}
T. Abualrub, I. Siap and H. Aydin, ``$\mathbb{Z}_2\mathbb{Z}_4$-additive cyclic codes,"
{\em IEEE Trans. Inform. Theory}, vol. 60, pp. 1508-1514, 2014.

\bibitem{Boni}
A. Bonisoli, ``Every equidistant linear code is a sequence of dual Hamming codes,"
\emph{Ars Combin.}, vol. 18, pp. 181-186, 1984.

\bibitem{AddDual}
J. Borges, C. Fern\'{a}ndez-C\'{o}rdoba, J. Pujol, J. Rif\`{a} and M. Villanueva, ``$\mathbb{Z}_2\mathbb{Z}_4$-linear codes: generator matrices and duality,"
\emph{Designs, Codes and Cryptography}, vol. 54, pp. 167-179, 2010.

\bibitem{Z2Z4Dual}
J. Borges, C. Fern\'{a}ndez-C\'{o}rdoba and R. Ten-Valls, ``$\mathbb{Z}_2\mathbb{Z}_4$-additive cyclic codes, generator polynomials and dual codes,"
{\em arXiv:} 1406.4425, 2015.

\bibitem{RankKernel}
J. Borges, K.T. Phelps and J. Rif\`{a}, ``The rank and kernel of
extended 1-perfect $\Z_4$-linear and additive non-$\Z_4$-linear codes," {\em
IEEE Trans. Inform. Theory}, vol. 49, pp. 2028-2034, 2003.

\bibitem{Add} J. Borges and J. Rif\`{a}, ``A characterization of 1-perfect
additive codes," {\em IEEE Trans. Inform. Theory}, vol. 45,
pp. 1688-1697, 1999.

\bibitem{DelLev}
P. Delsarte and V. Levenshtein, ``Association schemes and coding theory,"
{\em IEEE Trans. Inform. Theory}, 44, pp. 2477–2504 (1998).

\bibitem{Just}
J. Justensen and S. Forchhammer, \emph{Two-dimensional Information Theory and coding}, Cambridge Univ. Press., 2010.

\bibitem{Kro} 
D.S. Krotov, ``$\Z_4$-linear perfect codes", {\em Diskret. Anal. Issled. Oper.}, Ser. 1. 7(4), pp.~78--90,
2000. In Russian.

\bibitem{MacW}
F.J. MacWilliams and N.J.A. Sloane, \emph{The Theory of
  Error-Correcting Codes}, North-Holland Publishing Company, 1977.

\bibitem{PuRi}
J. Pujol and J. Rif\`{a}, ``Translation-invariant propelinear codes,"
{\em IEEE Trans. Inform. Theory}, vol. 43, pp. 590-598, 1997.

\bibitem{Josep}
J. Rif\`{a}, ``On a categorial isomorphism between a class of completely regular codes and a class of distance regular graphs.
{\em Applied Algebra, Algebraic Algorithms and Error-Correcting Codes, AAECC-8}, vol. 508 LNCS, pp. 164-179, 1990.
\end{thebibliography}
\end{document}